\newtheorem{theorem}{Theorem}[section]
\newtheorem{lemma}[theorem]{Lemma}
\newtheorem{cor}[theorem]{Corollary}
\theoremstyle{definition}
\theoremstyle{remark}
\numberwithin{equation}{section}
\newcommand\nutwid{\overset {\text{\lower 3pt\hbox{$\sim$}}}\nu}
\newcommand\omycite[1]{}
\newcommand{\beqs}{\begin{equation*}}
\newcommand{\eeqs}{\end{equation*}}
\newcommand{\beq}{\begin{equation}}
\newcommand{\eeq}{\end{equation}}
\begin{document}
\title[On the total number of ones associated with cranks of partitions modulo 11]{On the total number of ones associated with cranks of partitions modulo 11}

\author{Dandan Chen}
\address{Department of Mathematics, Shanghai University, People's Republic of China}
\address{Newtouch Center for Mathematics of Shanghai University, Shanghai, People's Republic of China}
\email{mathcdd@shu.edu.cn}
\author{Rong Chen}
\address{Department of Mathematics, Shanghai Normal University, Shanghai, China}
\email{rchen@shnu.edu.cn}
\author{Siyu Yin}
\address{Department of Mathematics, Shanghai University, People's Republic of China}
\email{siyuyin@shu.edu.cn, siyuyin0113@126.com}


\subjclass[2010]{11P81, 05A17}

\date{}


\keywords{Total number of ones, Crank for partitions, Theta-functions.}

\begin{abstract}
In 2021, Andrews mentioned that George Beck introduced partition statistics $M_w(r,m,n)$, which denote the total number of ones in the partition of $n$ with crank congruent to $r$ modulo $m$. Recently, a number of congruences and identities involving $M_w(r,m,n)$ for some small $m$ have been developed.  We  establish the 11-dissection of the generating functions for $M_{\omega}(r,11,n)-M_{\omega}(11-r,11,n)$, where $r=1,2,3,4,5$. In particular, we  discover a beautiful identity involving $M_{\omega}(r,11,11n+6)$.
\end{abstract}

\maketitle

\section{Introduction}
A partition of a positive integer $n$ is defined as a sequence of positive integers in non-increasing order that sum to $n$. Denoted by $p(n)$, it represents the count of such partitions for $n$. The following distinguished congruences were discovered by Ramanujan in \cite{Ramanujan-pn-1919}:
\begin{align}
p(5n+4)\equiv 0 &\pmod 5,\label{1.1}\\
p(7n+5)\equiv 0 &\pmod 7,\label{1.2}\\
p(11n+6)\equiv 0 &\pmod {11}\label{1.3},
\end{align}
for every non-negative integer $n$.

In an effort to provide purely combinatorial interpretations of Ramanujan's famous congruences \eqref{1.1}-\eqref{1.3}, two important partition statistics of ordinary partition-rank and crank-were introduced by Dyson \cite{Dyson-1944}, as well as by Andrews and Garvan \cite{Andrews-1988}. In 1944, Dyson \cite{Dyson-1944} defined the rank of a partition to be the largest part of the partition minus the number of parts. Dyson also conjectured that for $n\geq 0$,
$$
N(0,a,an+b)=N(1,a,an+b)=\cdots=N(a-1,a,an+b)=\frac{p(an+b)}{a},
$$
where $N(r,m,n)$ counts the number of partitions of $n$ with rank congruent to $r$ modulo $m$ and $(a,b)\in\{(5,4),(7,5)\}$. In 1954, Dyson's conjectures were proved by Atkin and Swinnerton-Dyer \cite{Swinnerton-Dyer-1954}. Therefore, the rank of partitions provided purely combinatorial descriptions of Ramanujan's congruences \eqref{1.1} and \eqref{1.2}. Unfortunately, Dyson's rank failed to account for Ramanujan's third congruence \eqref{1.3} combinatorially. Dyson conjectured the existence of an unknown partition statistic, which he whimsically called `the crank', to explain Ramanujan's third congruence modulo 11. The crank was found by Andrews and Garvan \cite{Andrews-1988} who defined the \text{crank} as the largest part, if the partition has no ones, and otherwise as the difference between the number of parts larger than the number of ones and the number of ones.
In 1987, Garvan \cite{Garvan-1988} proved that
$$
M(0,s,sn+t)=M(1,s,sn+t)=\cdots=M(s-1,s,sn+t)=\frac{p(sn+t)}{s},
$$
where $M(r,m,n)$ counts the number of partitions of $n$ with crank congruent to $r$ modulo $m$, and $(s,t)\in\{(5,4),(7,5),(11,6)\}$. Garvan's results implied that the crank of partitions provided purely combinatorial descriptions of Ramanujan's three congruences \eqref{1.1}-\eqref{1.3}.

In 2021, two partitions statistics, $NT(r,m,n)$ and $M_{\omega}(r,m,n)$, were proposed by Andrews in \cite{Andrews-2021}. These statistics count the total number of parts in the partitions of $n$ with rank congruent to $r$ modulo $m$, and the total number of ones in the partitions of $n$ with crank congruent to $r$ modulo $m$, respectively. They were introduced by George Beck, namely,
\begin{align*}
NT(r,m,n)=\sum_{\substack {\pi\vdash n,\\ \text{rank}(\pi)\equiv r \pmod m}} \sharp(\pi)
\end{align*}
and
\begin{align*}
M_{\omega}(r,m,n)=\sum_{\substack{\pi\vdash n,\\ \text{crank}(\pi)\equiv r \pmod m}} \omega(\pi),
\end{align*}
where $\sharp(\pi)$ denotes the number of parts of $\pi$, and $\omega(\pi)$ the number of ones.
The following Andrews-Beck type congruence, conjectured by Beck, was proved by Andrews \cite{Andrews-2021}:
$$
\sum_{m=1}^4mNT(m,5,5n+1)\equiv\sum_{m=1}^4mNT(m,5,5n+4)\equiv 0\pmod 5.
$$

Inspired by Andrews' work, Chern \cite{S.Chern-accepted,S.Chern-2022-RJ,S.Chern-2022-NT} proved some identities involving the weighted rank and crank moments, and established a number of identities and congruences on $NT(r,m,n)$ and $M_{\omega}(r,m,n)$, such as
\begin{align*}
\sum_{m=1}^4mM_{\omega}(m,5,5n+4)\equiv0\pmod 5.
\end{align*}
In 2021, Chan, Mao, and Osburn \cite{chan-2021} proved three variations of Andrews-Beck type congruences and posed a number of conjectures on $NT(r,m,n)$ and $M_{\omega}(r,m,n)$. Those conjectures regarding the Andrews-Beck type congruences for $NT(r,m,n)$ and $M_{\omega}(r,m,n)$ were subsequently proved by Chern \cite{S.Chern-2022-NT}. Additionally, Mao \cite{Mao-JMAA-2022} conjectured five identities on $NT(r,m,n)$ and $M_{\omega}(r,m,n)$. Three of them were subsequently proved by Jin, Liu, and Xia \cite{Xia-Mw-5-2022}, as well as by Mao and Xia \cite{Mao-Xia-Mw-3-2023}. Specifically, Mao demonstrated that the generating functions for total number of parts in partitions, associated with ranks modulo $7$, turn out to be theta-functions. Some identities for $M_{\omega}(b,7,7n+s)-M_{\omega}(7-b,7,7n+s)$  were proved by Xia, Yan, and Yao in \cite{Xia-Mw-7-2023}, such as
\begin{align}
\label{Xia-eta-1}\sum_{n=0}^{\infty}[M_{\omega}(2,7,7n+5)&-M_{\omega}(5,7,7n+5)\\&+4M_{\omega}(3,7,7n+5)-4M_{\omega}(4,7,7n+5)]q^n
=-7\frac{J_7^5J_{2,7}}{J_{1,7}^2J_{3,7}}\nonumber,\\
\label{Xia-eta-2}\sum_{n=0}^{\infty}[M_{\omega}(1,7,7n+5)&-M_{\omega}(6,7,7n+5)\\&+2M_{\omega}(3,7,7n+5)-2M_{\omega}(4,7,7n+5)]q^n
=-7\frac{J_7^5J_{3,7}}{J_{1,7}J_{2,7}^2}\nonumber.
\end{align}
Additionally, Mao and Xia had provided the generating function for $M_{\omega}(1,3,3n)-M_{\omega}(2,3,3n)$ and $M_{\omega}(1,4,2n)-M_{\omega}(3,4,2n)$ in the form of eta-products, as presented in \cite{Mao-Xia-Mw-3-2023} and \cite{Mao-Xia-Mw-4-2024}, respectively.

Considering the equation
\begin{align*}
\sum_{m=1}^{{(p-1)/2}}m\left[M_{\omega}(m,p,pn-\delta_p)-M_{\omega}(p-m,p,pn-\delta_p)\right]=0;
\end{align*}
where $\delta_p=\frac{p^2-1}{24}$. The case of $p=5$ was proved by Jin, Liu, and Xia in \cite{Xia-Mw-5-2022}. For the case of $p=7$, utilizing the method mentioned in \cite{Xia-Mw-7-2023}, we can obtain that
\begin{align}
\sum_{n=0}^{\infty}[3M_{\omega}(1,7,7n+5)&-3M_{\omega}(6,7,7n+5)\\ \nonumber
&-M_{\omega}(3,7,7n+5)+M_{\omega}(4,7,7n+5)]q^n=14q\frac{J_7^5J_{1,7}}{J_{2,7}J_{3,7}^2}.
\end{align}
With \eqref{Xia-eta-1} and \eqref{Xia-eta-2}, we prove the case of 7. In this paper, we prove its case of $p=11$. Motivated by the works of Chern, Mao, and Xia, we establish the 11-dissections of
$$
\sum_{n=0}^{\infty}(M_{\omega}(b,11,n)-M_{\omega}(11-b,11,n))q^n
$$
for $b=1,2,3,4,5$. We also derive an interesting equality as follows.
\begin{theorem}\label{main-thm}
For any nonnegative integer $n$, we have
$$
\sum_{m=1}^{5}m\left[M_{\omega}(m,11,11n+6)-M_{\omega}(11-m,11,11n+6)\right]=0.
$$
\end{theorem}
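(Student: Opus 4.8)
The plan is to deduce the theorem from the $11$-dissections of
\begin{equation*}
\sum_{n\ge0}\bigl(M_{\omega}(b,11,n)-M_{\omega}(11-b,11,n)\bigr)q^{n},\qquad b=1,2,3,4,5,
\end{equation*}
which are the main results of the paper. The starting point is a tractable form of the bivariate generating function $\mathcal{O}(\zeta,q):=\sum_{\pi}\omega(\pi)\,\zeta^{\crank(\pi)}q^{|\pi|}$. Removing all the $1$'s from a partition — a partition with $\omega\ge1$ ones is a pair consisting of $\omega$ ones and a partition $\mu$ with all parts $\ge2$, whose crank is $\#\{\text{parts of }\mu>\omega\}-\omega$, while no-ones partitions carry weight $0$ — gives
\begin{equation*}
\mathcal{O}(\zeta,q)=\frac{1}{(q^{2};q)_\infty}\sum_{\omega\ge 1}\omega\,\zeta^{-\omega}q^{\omega}\,\frac{(q^{\omega+1};q)_\infty}{(\zeta q^{\omega+1};q)_\infty},
\end{equation*}
and standard $q$-series manipulations (in the spirit of Chern) rewrite this in terms of the crank product $(q;q)_\infty/\bigl((\zeta q;q)_\infty(\zeta^{-1}q;q)_\infty\bigr)$ and a finite Lambert-type sum, which is the shape needed for dissection. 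Since $M_{\omega}(r,11,n)$ depends only on $r$ modulo $11$, discrete Fourier inversion over $\mathbb{Z}/11\mathbb{Z}$ recovers each $M_{\omega}(r,11,n)$ from the specializations $\mathcal{O}(\zeta_{11}^{\,j},q)$, $0\le j\le10$, and in particular
\begin{equation*}
\sum_{m=1}^{5}m\bigl[M_{\omega}(m,11,n)-M_{\omega}(11-m,11,n)\bigr]=\frac{1}{11}\sum_{j=1}^{10}\widehat{c}_{\,j}\,[q^{n}]\,\mathcal{O}(\zeta_{11}^{\,j},q),
\end{equation*}
where $\widehat{c}_{\,j}=\sum_{r=1}^{5}r\bigl(\zeta_{11}^{-jr}-\zeta_{11}^{\,jr}\bigr)$ is purely imaginary. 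Thus the theorem is equivalent to the vanishing of the $q^{11n+6}$-coefficient of this combination for all $n\ge0$.

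The second step is the $11$-dissection itself. It rests on three classical inputs: the explicit $11$-dissection of $1/(q;q)_\infty$ (and of the eta-quotients entering the closed form), obtained via the Jacobi triple product and the quintuple product identity; the $11$-dissections of the theta quotients $J_{a,11}$ that appear; and root-of-unity dissection formulas for the Lambert series $\sum_{n\ge1}(-1)^{n-1}q^{n(n+1)/2}/\bigl((1-\zeta q^{n})(1-\zeta^{-1}q^{n})\bigr)$, obtained by splitting the summation index $n$ according to its residue modulo $11$ and re-summing. Assembling these yields, for each $b$, an explicit theta-quotient formula for
\begin{equation*}
D_b(q):=\sum_{n\ge0}\bigl(M_{\omega}(b,11,11n+6)-M_{\omega}(11-b,11,11n+6)\bigr)q^{n},
\end{equation*}
namely the $q^{11n+6}$-component of the $11$-dissection.

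With the $D_b(q)$ in hand, the theorem is the single identity $\sum_{b=1}^{5}b\,D_b(q)\equiv0$. I expect the five functions $D_b(q)$ to be scalar multiples of one common theta-quotient, so that $\sum_{b=1}^{5}bD_b=\bigl(\sum_{b=1}^{5}b\lambda_b\bigr)\cdot(\text{theta-quotient})$ for explicit algebraic constants $\lambda_b$, reducing the claim to the numerical identity $\sum_{b=1}^{5}b\lambda_b=0$; equivalently, in the Fourier picture, the $q^{11n+6}$-term of $\sum_{j=1}^{10}\widehat{c}_{\,j}\,\mathcal{O}(\zeta_{11}^{\,j},q)$ must vanish, which one checks by pairing the coefficients $\widehat{c}_{\,j}$ against the $q^{11n+6}$-components of the dissected Lambert series.

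The main obstacle is the $11$-dissection step. Modulus-$11$ dissections are markedly heavier than the modulus-$5$ and modulus-$7$ cases treated by Chern and by Xia, Yan and Yao: the theta quotients break into more components, the Lambert series must be reorganized over eleven residue classes and re-summed, and one needs the explicit $11$-dissections of $1/(q;q)_\infty$ and of the associated eta-quotients, each itself a sum of several theta quotients. Managing this bookkeeping and collapsing the resulting expressions back to recognizable theta-quotients is the computational core of the argument; by contrast, once the dissections are available, verifying the final scalar identity $\sum_{b=1}^{5}b\lambda_b=0$ should be routine.
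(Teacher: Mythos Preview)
Your high-level strategy---carry out the $11$-dissection, extract the residue-class $m=6$, and verify that the weighted sum vanishes---matches the paper's. But the execution you sketch diverges from what the paper actually does in two substantive ways, and one of your expectations is simply wrong.

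First, the paper does not pass through the bivariate generating function $\mathcal{O}(\zeta,q)$ or Fourier inversion at roots of unity. It starts instead from Mao's closed formula (Lemma~\ref{generating function}) which already expresses $\sum_{n\ge0}\bigl(M_\omega(b,11,n)-M_\omega(11-b,11,n)\bigr)q^n$ as a difference of two $q$-only Lambert series with denominators $(1-q^{11n})$ and $(1-q^{11n})^2$. The dissection is then not done by ``classical'' triple/quintuple product manipulations as you propose, but via Chan's generalized Lambert series identities (Lemmas~\ref{s1-eta} and~\ref{s2-eta}) to rewrite each residue-class piece in terms of the auxiliary functions $X(a)$ and $H(a)$, which are in turn converted to eta-quotients (Lemmas~\ref{X-eta} and~\ref{S-eta}). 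The resulting candidate dissection formulas are finally \emph{verified as identities of modular functions on $\Gamma_1(121)$} using the valence formula (Theorem~\ref{Thm-2.12}) and Garvan's \texttt{thetaids} package, with Sturm-type bounds such as $B=-946$ and $B=-1683$. This modular-function verification is the engine of the proof; your proposal does not mention it, and it is doubtful that purely classical theta manipulation would close the argument at modulus $11$ without some such finite check.

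Second, your expectation that the five functions $D_b(q)=f_6(b)$ are scalar multiples of one common theta-quotient is incorrect. In the paper each $f_6(b)$ is a genuine six-term linear combination over the basis $\{V_6,\,V_6t,\,V_6t^2,\,V_6T,\,V_6Tt,\,V_6Tt^2\}$ (with $V_6,T,t$ explicit eta-quotients), and the five vectors of coefficients are not proportional. The final step is therefore not a single scalar identity $\sum_b b\lambda_b=0$, but the simultaneous vanishing of six separate linear combinations of the coefficient vectors---which the paper checks directly from \eqref{6-1}--\eqref{6-5}. This does not break your plan, but you should not expect the simplification you describe.
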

Based on the Appendix, we can directly obtain the following identities, which are analogous to \eqref{Xia-eta-1}.
\begin{cor}
Define $$f_m(b)=\sum_{n=0}^{\infty}(M_{\omega}(b,11,11n+m)-M_{\omega}(11-b,11,11n+m))q^n.$$ We have
\begin{align*}
&f_0(1)-f_0(3)-2f_0(4)+2f_0(5)=-11q\frac{J_{5,11}J_{11}^5}{J_{1,11}J_{3,11}J_{4,11}},\\
&f_2(1)+f_2(3)-f_2(4)=-11q\frac{J_{11}^5}{J_{2,11}^2},\\
&f_5(1)-2f_5(3)+f_5(5)=-11q\frac{J_{3,11}J_{11}^5}{J_{1,11}J_{4,11}^2},\\
&2f_7(3)+f_7(5)=-11\frac{J_{3,11}J_{4,11}J_{11}^5}{J_{1,11}J_{2,11}^2J_{5,11}},\\
&f_{10}(1)+f_{10}(2)+2f_{10}(4)=-11\frac{J_{5,11}J_{11}^5}{J_{2,11}^2J_{3,11}}.
\end{align*}
\end{cor}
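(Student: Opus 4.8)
The plan is to deduce all five identities in the Corollary directly from the 11-dissections assembled in the Appendix, treating each identity as a specific integer-linear combination of the component generating functions $f_m(b)$ and verifying that the resulting theta-quotient collapses to the stated single term. First I would fix notation: the Appendix provides, for each residue $m\in\{0,2,5,7,10\}$ appearing here and each $b\in\{1,2,3,4,5\}$, an explicit expression for
\[
f_m(b)=\sum_{n=0}^{\infty}\bigl(M_{\omega}(b,11,11n+m)-M_{\omega}(11-b,11,11n+m)\bigr)q^n
\]
as a $\mathbb{Z}[q]$-linear combination of generalized eta-quotients $J_{a,11}$ and $J_{11}$. The core of the argument is then purely formal: for the first identity I would substitute the Appendix formulas for $f_0(1),f_0(3),f_0(4),f_0(5)$ into the left side $f_0(1)-f_0(3)-2f_0(4)+2f_0(5)$, and similarly for the other four lines with their prescribed coefficients, and simplify.

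The engine driving each simplification is the theory of theta-functions and the Jacobi triple product, together with the standard addition and modular relations among the $J_{a,11}$ at the eleventh roots of unity. Concretely, after collecting terms I expect each combination to reduce to a sum of a small number of eta-quotients that must be shown equal to the advertised monomial, e.g.
\[
f_0(1)-f_0(3)-2f_0(4)+2f_0(5)=-11q\,\frac{J_{5,11}J_{11}^5}{J_{1,11}J_{3,11}J_{4,11}}.
\]
For this step I would appeal to the known linear relations among the functions $J_{a,11}$ (the ``11-dissection identities'' for theta-quotients, analogous to those used to obtain \eqref{xia-eq}), clearing the common denominator $J_{1,11}J_{2,11}^2J_{3,11}J_{4,11}^2J_{5,11}$ or an appropriate sub-product so that both sides become polynomials in the $J_{a,11}$, and then checking the resulting polynomial identity. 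Each of the five lines is handled by the same mechanism with its own denominator; the coefficients $(1,-1,-2,2)$, $(1,1,-1)$, $(1,-2,1)$, $(2,1)$, and $(1,1,2)$ are chosen precisely so that the ``mixed'' cross-terms cancel and only the single surviving quotient remains.

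The main obstacle I anticipate is not conceptual but organizational: verifying the final polynomial identities in the $J_{a,11}$ requires the correct catalogue of theta relations at level $11$, and the denominators differ from line to line, so one must be careful to clear exactly the right product and to track the power of $q$ (the prefactors $q^0$ versus $q^1$) that emerges from the dissection. A clean way to discharge these verifications rigorously is to note that both sides are modular forms (or weakly holomorphic modular objects) of the same weight and level $\Gamma_1(11)$ or $\Gamma_0(121)$, so after multiplying through by the common denominator it suffices to match finitely many Fourier coefficients up to the Sturm bound; this converts each identity into a finite, mechanical check. I would therefore carry out the five reductions in order, in each case (i) substituting the Appendix expressions, (ii) clearing denominators, and (iii) confirming the polynomial identity either by the level-$11$ theta relations or by a Sturm-bound coefficient comparison, which completes the proof of the Corollary.
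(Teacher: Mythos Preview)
Your overall plan---substitute the Appendix formulas and simplify---is exactly what the paper does, but you are anticipating a substantially harder verification than what is actually required, and this inflates the argument.

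The point you are missing is that the Appendix does not merely express each $f_m(b)$ as ``some eta-quotient combination''; it writes every $f_m(b)$ in a fixed finite basis
\[
\{\,V_m t^{j},\ V_m T t^{j},\ Y_m\,\}
\]
(for a small range of integer exponents $j$) with explicit rational coefficients, where $V_m$, $T$, $t$, $Y_m$ are themselves single eta-quotients. Forming the prescribed integer combination is then pure arithmetic with those rational coefficients: in every one of the five cases all basis coefficients vanish except one. For instance, in the $m=0$ case one finds directly that
\[
f_0(1)-f_0(3)-2f_0(4)+2f_0(5)=-11\,V_0T,
\]
and since $V_0T=\dfrac{qJ_{4,11}J_{11}^5}{J_{2,11}^2J_{3,11}}\cdot\dfrac{J_{2,11}^2J_{5,11}}{J_{1,11}J_{4,11}^2}=\dfrac{qJ_{5,11}J_{11}^5}{J_{1,11}J_{3,11}J_{4,11}}$, the identity follows by inspection. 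Similarly the $m=2$ combination collapses to $-11V_2$, the $m=5$ combination to $-11V_5Tt$, the $m=7$ combination to $-11V_7$, and the $m=10$ combination to $-11V_{10}t^{-1}$, each of which is a single eta-quotient matching the stated right-hand side.

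So steps (ii) and (iii) in your plan---clearing denominators, invoking level-$11$ theta relations, or appealing to a Sturm bound---are unnecessary. No nontrivial identity among the $J_{a,11}$ is used at this stage; the whole proof is a handful of rational-number additions in the basis already provided by the Appendix, followed by multiplying out the definition of the surviving monomial $V_m T^{\epsilon}t^{j}$.
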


The paper is organized as follows. In Section \ref{sec-pre}, we introduce some basic notations, modular functions, and several important lemmas. In Section \ref{proof} we demonstrate the proof of Theorem \ref{main-thm} using the method mentioned in \cite{Chen-maple-2020} and \cite{Mao-main-eta-2022}. Additionally, we obtain the 11-dissection of the generating function
$$
\sum_{n=0}^{\infty}(M_{\omega}(b,11,n)-M_{\omega}(11-b,11,n))q^n
$$
for $b=1,2,3,4,5$, which is demonstrated in the Appendix.

\section{Preliminaries}\label{sec-pre}

\subsection{Notations}

In this section, we first introduce the notations used in this paper, some of which are also mentioned in \cite{Mao-main-eta-2022}. For $1 \leq m \leq k$ and $|q|\textless 1$, we define
\begin{align}
&(x)_{\infty}:=(x;q)_{\infty}:=\prod_{k=0}^{\infty}(1-xq^k),\nonumber\\
&(x_1,\cdots ,x_m)_{\infty}:=(x_1,\cdots ,x_m;q)_{\infty}:=(x_1;q)_{\infty}\cdots (x_m;q)_{\infty},\nonumber\\
&[x_1,\cdots ,x_m]_{\infty}:=[x_1,\cdots ,x_m;q]_{\infty}:=(x_1,q/x_1,\cdots ,x_m,q/x_m;q)_{\infty},\nonumber\\
&J_{a,b}:=(q^a,q^{b-a},q^b;q^b)_{\infty},\nonumber\\
&J_a:=J_{a,3a}=(q^a;q^a)_{\infty},\nonumber\\
&X(a;q):=\sum_{k=0}^{\infty}(\frac{aq^k}{1-aq^k}-\frac{q^{k+1}/a}{1-q^{k +1}/a}),\label{X-def}\\
&H(a;q):=\sum_n \frac{aq^n}{(1-aq^n)^2},\label{H-def}\\
&S_0(a,k,l,j):={\sum_n}' \frac{(-1)^{kn}q^{\frac{kn(lkn+1)}{2}+akn}}{(1-q^{k^2n})^j},\label{S0-def}\\
&S_m(a,k,l,j):=\sum_n\frac{(-1)^{kn+m}q^{\frac{(kn+m)(l(kn+m)+1)}{2}+a(kn+m)}}{(1-q^{k(kn+m)})^j}.\label{Sm-def}
\end{align}
Besides, in order to simplify the notations used below, we introduce some abbreviations:
\begin{align*}
&X(a):=X(q^{11a};q^{121}), \text{where $a$ is an integer not divisible by $11$};\\
&{\sum_n}:\text{summation of the formula over $n$ from $-\infty$ to $\infty$};\\
&{\sum_n}':\text{summation of the formula over $n$ from $-\infty$ to $\infty$ but excluding $n=0$};\\
&idem(a_1;a_2,\cdots,a_{r+1}):\text{the summation of the $r$ expressions obtained from the preceding} \\
&\text{expression by interchanging $a_1$ with each $a_k$, $k=2,3,\cdots,r+1$};\\
&\mathcal{F}(b):=\sum_{n=0}^{\infty}(M_{\omega}(b,11,n)-M_{\omega}(11-b,11,n))q^{n};\\
&F_m(b):=\sum_{n=0}^{\infty}(M_{\omega}(b,11,11n+m)-M_{\omega}(11-b,11,11n+m))q^{11n}.
\end{align*}

\subsection{Modular functions}
According to \cite{Berndt-III} and \cite{Ra77}, let $\mathcal{H}=\{\tau :\Im(\tau)\textgreater 0\}$ denote the complex upper half-plane. For each $M=\left(\begin{array}{cc} a & b \\c & d \end{array} \right)\in M_2^{+}(\mathbb{Z})$, where $M_2^{+}(\mathbb{Z})$ is the set of integer $2\times 2$ matrices with positive determinant, the bilinear transformation $M(\tau)$ is defined by
\begin{align*}
M\tau=M(\tau)=\frac{a\tau+b}{c\tau+d}.
\end{align*}
The slash operator is defined by
\begin{align*}
(f|M)(\tau)=f(M\tau),
\end{align*}
and satisfies
\begin{align*}
f|MS=f|M|S,
\end{align*}
for matrices $M$ and $S$. The modular group $\Gamma(1)$ is defined by
\begin{align*}
\Gamma(1)=\left\{\left(\begin{array}{cc} a & b \\c & d \end{array} \right)\in M_2^{+}(\mathbb{Z}):ad-bc=1 \right\}.
\end{align*}
We consider the following subgroup $\Gamma$ of the modular group with finite index
\begin{align*}
\Gamma_1(N)=\left\{\left(\begin{array}{cc} a & b \\c & d \end{array} \right)\in \Gamma(1): \left(\begin{array}{cc} a & b \\c & d \end{array} \right)\equiv \left(\begin{array}{cc} 1 & * \\0 & 1 \end{array} \right) \pmod N      \right\}.
\end{align*}
The generalized Dedekind eta function is defined to be
\begin{align}\label{generalized-eta}
\eta_{\delta,g}(\tau)=q^{\frac{\delta}{2}P_2(g/\delta)}\prod_{m\equiv\pm g\pmod \delta}(1-q^m),
\end{align}
where $P_2(t)=\{t\}^2-\{t\}+\frac{1}{6}$ is the second periodic Bernoulli polynomial, $\{t\}=t-[t]$ is the fractional part of $t$, $g$, $\delta$, $m\in\mathbb{Z}^{+}$ and $0\textless g\textless \delta$. The function $\eta_{\delta,g}(\tau)$ is a modular function on $SL_2(\mathbb{Z})$ with a multiplier system. Let $N$ be a fixed positive integer. A generalized Dedekind eta-product of level $N$ has the form
\begin{align}\label{f-tau}
f(\tau)=\prod_{\delta|N,~0\textless g\textless\delta}\eta_{\delta,g}^{r_{\delta,g}}(\tau),
\end{align}
where
\begin{align}\label{r}
r_{\delta,g}\in \left\{\begin{array}{ll}
			\frac{1}{2}\mathbb{Z}, ~&g=\frac{\delta}{2},\\
			\mathbb{Z},~&otherwise.\end{array}\right.
\end{align}
Robins \cite{R094} has found sufficient conditions under which a generalized eta-product is a modular function on $\Gamma_1(N)$.
\begin{theorem}\cite[Theorem 3]{R094}
The function $f(\tau)$, defined in \eqref{f-tau}, is a modular function on $\Gamma_1(N)$ if
\begin{align*}
(i)&\sum_{\delta|N,~g}\delta P_2(\frac{g}{\delta})r_{\delta,g} \equiv 0 \pmod 2,~and\\
(ii)&\sum_{\delta|N,~g}\frac{N}{\delta}P_2(0)r_{\delta,g} \equiv 0 \pmod 2.
\end{align*}.
\end{theorem}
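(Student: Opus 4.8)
The plan is to verify, directly from the definition, the three properties that make $f$ a weight-$0$ modular function on $\Gamma_1(N)$: that $f$ is holomorphic and nonvanishing on the upper half-plane $\mathcal{H}$, that $f$ is invariant under the slash action of every $\gamma\in\Gamma_1(N)$, and that $f$ is meromorphic at every cusp. Since the statement already records that each $\eta_{\delta,g}$ is a weight-$0$ modular function on $\mbox{SL}_2(\mathbb{Z})$ carrying only a multiplier system (a root of unity, with no automorphy factor $(c\tau+d)$), the whole content collapses to controlling the combined multiplier of the product. Holomorphy and nonvanishing on $\mathcal{H}$ are immediate: for $\Im\tau>0$ the prefactor $q^{\delta P_2(g/\delta)/2}$ never vanishes and the product $\prod_{m\equiv\pm g\,(\delta)}(1-q^m)$ converges to a nonzero value, so the same holds for any finite power-product $f=\prod\eta_{\delta,g}^{r_{\delta,g}}$.

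For invariance I would first isolate the translation $T=\begin{pmatrix}1&1\\0&1\end{pmatrix}\in\Gamma_1(N)$. Writing $q=e^{2\pi i\tau}$ and substituting $\tau\mapsto\tau+1$ leaves every factor $1-q^m$ fixed while multiplying the prefactor by $e^{\pi i\delta P_2(g/\delta)}$, so that
$$\eta_{\delta,g}\,|\,T=e^{\pi i\delta P_2(g/\delta)}\,\eta_{\delta,g},\qquad f\,|\,T=\exp\Bigl(\pi i\sum_{\delta\mid N,\,g}\delta P_2(g/\delta)\,r_{\delta,g}\Bigr)\,f.$$
Thus condition (i) is precisely the requirement $f\,|\,T=f$. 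I would then invoke the general transformation law of $\eta_{\delta,g}$ under an arbitrary $\gamma=\begin{pmatrix}a&b\\c&d\end{pmatrix}\in\Gamma_1(N)$ in the form established by Schoeneberg and Yang. Because $N\mid c$ forces $\delta\mid c$, and $a\equiv1\pmod\delta$ forces $ag\equiv g\pmod\delta$, the transformation preserves the index $g$ and takes the shape $\eta_{\delta,g}(\gamma\tau)=\varepsilon_{\delta,g}(\gamma)\,\eta_{\delta,g}(\tau)$, where $\varepsilon_{\delta,g}(\gamma)$ is a root of unity assembled from the value $\delta P_2(g/\delta)$, a generalized Dedekind sum, and a term governed by $\tfrac{N}{\delta}P_2(0)$ controlling the behavior in the $c$-direction.

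The decisive step is to combine these multipliers across the product and show the total root of unity is $1$ for every such $\gamma$. The exponent of $\prod_{\delta,g}\varepsilon_{\delta,g}(\gamma)^{r_{\delta,g}}$ splits into a piece proportional to $\sum\delta P_2(g/\delta)\,r_{\delta,g}$ and a piece proportional to $\sum\tfrac{N}{\delta}P_2(0)\,r_{\delta,g}$, while the genuinely $\gamma$-dependent Dedekind-sum contributions cancel through the reciprocity law once the index $g$ is preserved. Condition (i) annihilates the first piece exactly as in the $T$ computation, and condition (ii) — equivalently $\sum_{\delta,g}\tfrac{N}{\delta}\,r_{\delta,g}\equiv0\pmod{12}$, since $P_2(0)=\tfrac16$ — annihilates the second. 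I expect this matching of the two stated congruences to the two surviving pieces of the multiplier to be the main obstacle, since it requires the precise normalization of the generalized Dedekind sum in the transformation law and a careful verification that its full $\gamma$-dependence drops out, leaving only the two congruence-controlled terms.

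Finally I would confirm meromorphy at the cusps. At a cusp $s=a/c$ the order of $\eta_{\delta,g}$ is given by a finite expression in $P_2$ evaluated at a shift of $g/\delta$ depending on $\gcd(c,\delta)$, so each factor has finite order there and hence so does the product $f$. Assembling holomorphy and nonvanishing on $\mathcal{H}$, triviality of the multiplier on $\Gamma_1(N)$ forced by (i) and (ii), and finiteness of the orders at all cusps, we conclude that $f$ is a modular function on $\Gamma_1(N)$, which completes the proof.
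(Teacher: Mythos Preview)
The paper does not supply a proof of this statement: it is quoted directly as \cite[Theorem~3]{R094}, Robins' sufficient criterion for a generalized Dedekind eta-product to be a modular function on $\Gamma_1(N)$, with no argument given. There is therefore no proof in this paper to compare your proposal against.

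Your outline does follow the shape of the standard argument (Robins, building on Schoeneberg; see also Yang): verify holomorphy and nonvanishing on $\mathcal H$, compute the multiplier of each $\eta_{\delta,g}$ under an arbitrary $\gamma\in\Gamma_1(N)$, multiply, and check that conditions (i) and (ii) force the aggregate root of unity to be $1$; meromorphy at the cusps follows from the explicit order formula also quoted in the paper from Robins. The one point to tighten is your claim that ``the genuinely $\gamma$-dependent Dedekind-sum contributions cancel through the reciprocity law once the index $g$ is preserved.'' In the actual computation no reciprocity identity is invoked: for $\gamma\in\Gamma_1(N)$ the hypotheses $\delta\mid c$ and $a\equiv d\equiv 1\pmod\delta$ already simplify the explicit transformation formula of $\eta_{\delta,g}$, and the surviving exponent of the multiplier depends on $\gamma$ only through integer multiples of the two quantities $\delta P_2(g/\delta)$ and $\tfrac{N}{\delta}P_2(0)$. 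Conditions (i) and (ii) then kill this exponent directly. So your matching of the two conditions to the two pieces of the multiplier is correct, but the mechanism is direct evaluation of the transformation law rather than Dedekind-sum reciprocity, and supplying that evaluation with the precise formula is exactly the work a complete proof requires.
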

Cho, Koo and Park \cite{CKP09} have found a set of inequivalent cusps for $\Gamma_1(N)\cap\Gamma_0(mZ)$. The group $\Gamma_1(N)$ corresponds to the case $m=1$.

\begin{theorem}\cite[Corollary 4]{CKP09}\label{Thm-2.10}
Let $a$, $c$, $a'$, $c'\in\mathbb{Z}$ with ($a$, $c$)=($a'$, $c'$)=1.\\
(i)The cusps $\frac{a}{c}$ and $\frac{a'}{c'}$ are equivalent mod $\Gamma_1(N)$ if and only if
$$
\binom{a'}{c'}\equiv \pm\binom{a+nc}{c} \pmod N
$$
for some integers $n$.\\
(ii)The following is a complete set of inequivalent cusps mod $\Gamma_{1}$.
\begin{align*}
\mathcal{S}=&\Bigg\{\frac{y_{c,j}}{x_{c,j}}:0\textless c|N,0\textless s_{c,i},a_{c,j}\leq N,(s_{c,i},N)=(a_{c,j},N)=1,\\
&s_{c,i}=s_{c,i'}\Leftrightarrow s_{c,1}\equiv\pm s_{c',i'}\pmod {\frac{N}{c}},\\
&a_{c,j}=a_{c,j'}\Leftrightarrow \left\{\begin{array}{ll}
a_{c,j}\equiv\pm a_{c,j'}\pmod c, ~\text{if}~c=\frac{N}{2}~\text{or}~ N,\\
a_{c,j}\equiv\pm a_{c,j'}\pmod c, ~\text{otherwise}.\end{array}
\right.\\
&x_{c,i},y_{c,j}\in\mathbb{Z}~\text{chosen so that}~x_{c,i}\equiv cs_{c,i} , y_{c,j}\equiv a_{c,j}\pmod N, (x_{c,i},y_{c,j})=1.\Bigg\}
\end{align*}
(iii)The fan width of the cusp $\frac{a}{c}$ is given by
\begin{align*}
\kappa(\frac{a}{c},\Gamma_1(N))=\left\{\begin{array}{ll}
1,~&\text{if}~N=4~\text{and}~(c,4)=2,\\
\frac{N}{(c,N)},~&\text{otherwise}.\end{array}
\right.
\end{align*}
\end{theorem}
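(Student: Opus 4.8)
The plan is to prove the three parts in turn, reducing cusp equivalence to linear algebra over $\mathbb{Z}/N$. Throughout I identify a cusp $\frac{a}{c}\in\mathbb{P}^1(\mathbb{Q})$ (with $\gcd(a,c)=1$, and $\infty=\frac10$) with the primitive column vector $\begin{pmatrix} a \\ c\end{pmatrix}$ taken up to sign, since $\frac{a}{c}=\frac{-a}{-c}$. The left action of $\gamma=\begin{pmatrix}\alpha & \beta \\ \gamma_0 & \delta\end{pmatrix}\in SL_2(\mathbb{Z})$ on cusps is then ordinary matrix multiplication $\begin{pmatrix} a \\ c\end{pmatrix}\mapsto\gamma\begin{pmatrix} a \\ c\end{pmatrix}$, again up to sign.

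For part (i) the forward implication is immediate: if $\frac{a'}{c'}=\gamma\frac{a}{c}$ with $\gamma\in\Gamma_1(N)$, then $\begin{pmatrix} a' \\ c'\end{pmatrix}=\pm\gamma\begin{pmatrix} a \\ c\end{pmatrix}$, and reducing mod $N$ with $\gamma\equiv\begin{pmatrix} 1 & t \\ 0 & 1\end{pmatrix}\pmod N$ gives $\begin{pmatrix} a' \\ c'\end{pmatrix}\equiv\pm\begin{pmatrix} a+tc \\ c\end{pmatrix}\pmod N$, the claimed congruence with $n=t$. The converse rests on the key lemma that $\Gamma(N)$ acts transitively on primitive vectors lying in a fixed residue class mod $N$. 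I would prove this by first reducing, via $SL_2(\mathbb{Z})$-transitivity on primitive vectors (Bézout), to the class of $\begin{pmatrix} 1 \\ 0\end{pmatrix}$, and then completing a primitive vector $\begin{pmatrix} 1+aN \\ bN\end{pmatrix}$ to a matrix in $SL_2(\mathbb{Z})$ and adjusting the free parameter — using the determinant relation mod $N$ — so that the completing matrix lands in $\Gamma(N)$. Granting this, if $\begin{pmatrix} a' \\ c'\end{pmatrix}\equiv\pm\begin{pmatrix} a+nc \\ c\end{pmatrix}\pmod N$, I replace $\frac{a}{c}$ by the equivalent cusp $\begin{pmatrix} 1 & n \\ 0 & 1\end{pmatrix}\frac{a}{c}$ and the representative of $\frac{a'}{c'}$ by its negative if needed, reducing to two primitive vectors congruent mod $N$, which the lemma connects by an element of $\Gamma(N)\subseteq\Gamma_1(N)$.

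For part (ii) I would use (i) to read off a complete invariant and then normalize representatives. By (i), the orbit of $\frac{a}{c}$ is governed by $\begin{pmatrix} a \\ c\end{pmatrix}\bmod N$ modulo the two moves $a\mapsto a+nc$ and overall sign; hence $c\bmod N$ up to sign together with $a\bmod\gcd(c,N)$ up to the \emph{coordinated} sign is a complete invariant. I would first show every cusp is equivalent to one with numerator coprime to $N$: since $\gcd(a,c)=1$, a Chinese Remainder Theorem choice of $n$ makes $a+nc$ avoid every prime dividing $N$. Writing $d=\gcd(c,N)$, a divisor of $N$, I would lift the unit $c/d\bmod(N/d)$ to a unit $s\bmod N$ (surjectivity of $(\mathbb{Z}/N)^{\times}\to(\mathbb{Z}/(N/d))^{\times}$) so that the denominator becomes $\equiv ds\pmod N$; this is exactly the shape $x_{c,i}\equiv c\,s_{c,i}$ in $\mathcal{S}$ (their $c$ being my $d$). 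Feeding two such normalized representatives into (i), the second coordinate forces the divisor $d$ to agree and $s\equiv\pm s'\pmod{N/d}$, while the first coordinate — since $ds$ generates $d\mathbb{Z}/N$ — forces $a\equiv\pm a'\pmod d$ with the \emph{same} sign, recovering the identification rules defining $\mathcal{S}$, the special behavior at $d=N/2,N$ arising when the two sign choices coincide. This bookkeeping — establishing both completeness (every cusp meets the list) and irredundancy (distinct parameters give inequivalent cusps) while tracking the coupled $\pm$ signs and the $c=N/2,N$ cases — is the step I expect to be the main obstacle.

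For part (iii) I would compute the cusp stabilizer directly. Choosing $\sigma=\begin{pmatrix} a & b \\ c & d\end{pmatrix}\in SL_2(\mathbb{Z})$ with $\sigma\infty=\frac{a}{c}$, one has $\sigma\begin{pmatrix} 1 & h \\ 0 & 1\end{pmatrix}\sigma^{-1}=\begin{pmatrix} 1-ach & a^2h \\ -c^2h & 1+ach\end{pmatrix}$, and membership in $\Gamma_1(N)$ requires $N\mid c^2h$ and $N\mid ach$; since $\gcd(a,c)=1$ these combine to $N\mid ch$, giving the generic fan width $h=N/\gcd(c,N)$. The exceptional value $1$ when $N=4$ and $(c,4)=2$ is the \emph{irregular-cusp} phenomenon: there $-I\notin\Gamma_1(4)$, yet $-\sigma\begin{pmatrix} 1 & h \\ 0 & 1\end{pmatrix}\sigma^{-1}\in\Gamma_1(4)$ already for $h=1$ (here $c^2\equiv0$ and $ac\equiv2\pmod4$), so in $PSL_2$ the stabilizer is generated at half the naive width. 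I would isolate this case by checking the diagonal congruences $1\pm ach\equiv1\pmod N$ against those of the negated element, confirming the halving occurs exactly for $N=4$, $(c,4)=2$ and nowhere else.
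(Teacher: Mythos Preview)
The paper does not contain a proof of this statement: Theorem~\ref{Thm-2.10} is quoted verbatim from Cho, Koo, and Park \cite{CKP09} (as Corollary~4 there) and is used as a black box in the modular-function verification algorithm. There is therefore no ``paper's own proof'' to compare against.

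That said, your proposal is a sound outline of the standard argument and would succeed. Part~(i) is exactly the classical reduction to the lemma that $\Gamma(N)$ acts transitively on primitive vectors in a fixed residue class mod $N$; your sketch of that lemma (reduce to the class of $\begin{pmatrix}1\\0\end{pmatrix}$ and adjust the completing column) is the usual one. Part~(iii) is a correct direct stabilizer computation, and your identification of the $N=4$, $(c,4)=2$ case as the unique irregular-cusp phenomenon (where $-\sigma T\sigma^{-1}\in\Gamma_1(4)$ although $-I\notin\Gamma_1(4)$) is accurate. Part~(ii) is where the work lies, and you have correctly flagged the delicate point: the sign in $s\equiv\pm s'\pmod{N/d}$ and the sign in $a\equiv\pm a'\pmod d$ are \emph{coupled}, and disentangling when they can be chosen independently is precisely what produces the case split at $c=N/2$ and $c=N$. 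Your plan to extract the invariant $(d,\,s\bmod N/d,\,a\bmod d)$ up to a simultaneous sign from~(i) and then normalize is the right strategy; just be prepared for the bookkeeping to be genuinely fiddly, since the statement as printed in the paper already contains some transcription slips (e.g.\ the two branches of the $a_{c,j}$ identification are typeset identically), so you should consult \cite{CKP09} directly for the precise target.
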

In this theorem, it is understood, as usual that the fraction $\frac{\pm 1}{0}$ corresponds to $\infty$. Robins \cite{R094} has calculated the invariant order of $\eta_{\delta,g}(\tau)$ at any cusp. This gives a method for calculating the invariant order at any cusp of a generalized eta-product.
\begin{theorem}\cite{R094}\label{Thm-2.11}
The order at the cusp $\zeta=\frac{a}{c}$ (assuming $(a,c)=1$) of the generalized eta-function $\eta_{\delta,g}(\tau)$ (defined in \eqref{generalized-eta} and assuming  $0\textless g\textless \delta$ ) is
\begin{align}
ord(\eta_{\delta,g}(\tau);\zeta)=\frac{\varepsilon^2}{2\delta}P_2(\frac{ag}{\varepsilon}),
\end{align}
where $\varepsilon=(\delta,c).$
\end{theorem}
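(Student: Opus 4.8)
\textbf{Proof proposal for Theorem \ref{Thm-2.11}.}

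The plan is to reduce the order computation at an arbitrary cusp $\zeta=\frac{a}{c}$ to a known $q$-expansion at the cusp $\infty$ by moving $\zeta$ to $\infty$ with an element of $\SL_2(\mathbb{Z})$, and then to read off the leading exponent of $q$ after accounting for the fan width. First I would fix $(a,c)=1$ and choose integers $b,d$ with $ad-bc=1$, so that $\gamma=\MAT{a}{b}{c}{d}\in\SLZ$ sends $\infty$ to $\zeta$. The invariant order of $\eta_{\delta,g}$ at $\zeta$ is, by definition, the order of vanishing in the local uniformizer at that cusp, which I would express as $\frac{1}{\kappa}$ times the $q$-order of the transformed function $(\eta_{\delta,g}\mid\gamma)(\tau)$ at $\infty$, where $\kappa$ is the fan width from Theorem \ref{Thm-2.10}(iii). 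Since the overall normalization in the statement absorbs the width, the computational heart is to determine how the product $\prod_{m\equiv\pm g\,(\delta)}(1-q^m)$ together with the prefactor $q^{\frac{\delta}{2}P_2(g/\delta)}$ behaves under $\gamma$.

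Next I would carry out the transformation concretely. Writing $\eta_{\delta,g}(\tau)=q^{\frac{\delta}{2}P_2(g/\delta)}\prod_{m\equiv\pm g\,(\delta)}(1-q^{m})$ and recalling that $\eta_{\delta,g}$ is built from the classical theta/eta-type infinite product, the action of $\gamma$ rescales $\tau$ so that the relevant local parameter becomes $q_{\varepsilon}=e^{2\pi i\tau/\varepsilon'}$ for an appropriate width depending on $\varepsilon=(\delta,c)$. The key arithmetic input is that the residues $m\equiv\pm g\pmod\delta$ interact with the denominator $c$ only through the gcd $\varepsilon=(\delta,c)$: the progression $\{m:m\equiv\pm g\,(\delta)\}$ splits, under reduction mod $\varepsilon$, in a way that isolates the contribution $ag\bmod\varepsilon$. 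I expect the leading power of the local parameter to assemble, after collecting the prefactor exponent and the smallest achievable exponent in the product, into precisely $\frac{\varepsilon^2}{2\delta}P_2\!\left(\frac{ag}{\varepsilon}\right)$. The appearance of $P_2$ here is natural, since the second Bernoulli polynomial is exactly the quantity governing the logarithmic behavior of $\eta_{\delta,g}$ under $\SL_2(\mathbb{Z})$, and it is invariant under $t\mapsto -t$ up to the symmetry built into $g\equiv\pm g$.

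The main obstacle, and the step I would spend the most care on, is the bookkeeping of the multiplier and the width simultaneously: one must verify that the factor $\frac{\varepsilon^2}{2\delta}$ emerges correctly from combining (a) the substitution $\tau\mapsto\gamma\tau$, which contributes a factor involving $\frac{1}{(c\tau+d)}$ and hence a $\tau$-rescaling by $\varepsilon$, and (b) the definition of the invariant order as order-per-unit-width, which divides by the fan width $\kappa(\zeta)=\frac{N}{(c,N)}$ in the generic case. A clean way to manage this is to compute the order of $\log\eta_{\delta,g}$ as a limit $\tau\to i\infty$ after the change of variables, separating the linear-in-$\tau$ term (which gives the order) from the bounded terms. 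I would then check the formula against the classical Dedekind eta function $\eta(\delta\tau)$, recovering the standard order $\frac{\varepsilon^2}{2\delta}\cdot\frac{1}{6}=\frac{\varepsilon^2}{12\delta}$ as the case $P_2(0)=\frac16$, which serves as a consistency test. Since Theorem \ref{Thm-2.11} is quoted from Robins \cite{R094}, the role of this proof is to reconstruct that computation; the genuinely delicate part is ensuring the $\pm g$ symmetry and the gcd reduction mod $\varepsilon$ are handled so that $P_2$ is evaluated at the reduced argument $\frac{ag}{\varepsilon}$ rather than at $\frac{ag}{\delta}$.
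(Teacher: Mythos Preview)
The paper does not prove Theorem \ref{Thm-2.11} at all: it is stated with the citation \cite{R094} and used purely as a black box in the algorithmic verification (computing $B$ in Theorem \ref{Thm-2.12}). There is therefore no ``paper's own proof'' to compare against; your proposal is an attempt to reconstruct Robins' original argument, which the present paper neither reproduces nor sketches.

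As for the proposal itself, the overall strategy---send $\zeta$ to $\infty$ by some $\gamma\in\SLZ$, expand $(\eta_{\delta,g}\mid\gamma)(\tau)$ in the appropriate local parameter, and identify the leading exponent as a value of $P_2$---is the standard and correct route. One point needs tightening: you invoke the fan width $\kappa(\zeta)=\frac{N}{(c,N)}$ from Theorem \ref{Thm-2.10}(iii), but the formula in Theorem \ref{Thm-2.11} makes no reference to a level $N$; it involves only $\delta$, $c$, and $\varepsilon=(\delta,c)$. The quantity $\ord(\eta_{\delta,g};\zeta)$ here is the $q$-order at the cusp before multiplication by the $\Gamma_1(N)$-width, and the factor $\frac{\varepsilon^2}{2\delta}$ arises from the transformation of $\delta\tau$ under $\gamma$ (specifically from writing $\delta\gamma\tau$ in terms of a new modular variable with denominator governed by $\varepsilon$), not from dividing by $N/(c,N)$. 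If you carry the $\Gamma_1(N)$-width into this computation you will not recover $\frac{\varepsilon^2}{2\delta}$. Separating these two normalizations cleanly is exactly the ``bookkeeping'' step you flag as delicate, and it is where the argument as written would go wrong.
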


\begin{theorem}\cite[Corollary 2.5]{FrGa19}\label{Thm-2.12}
Let $f_1(\tau)$, $f_2(\tau)$, $\cdots$, $f_n(\tau)$ be generalized eta-products that are modular functions on $\Gamma_1(N)$. Let $\mathcal{S}_N$ be a set of inequivalent cusps for $\Gamma_1(N)$. Define the constant
\begin{align}\label{B}
B=\sum_{s\in\mathcal{S}_N,~s\neq\infty}\min(\{Ord(f_j,s,\Gamma_1(N)):1\leq j\leq n\}\cup\{0\}),
\end{align}
and consider
\begin{align}\label{f-tau-1}
g(\tau):=\alpha_1f_1(\tau)+\alpha_2f_2(\tau)+\cdots+\alpha_nf_n(\tau)+1,
\end{align}
where each $\alpha_j\in\mathbb{C}$. Then
$$
g(\tau)\equiv 0
$$
if and only if
$$
Ord(g(\tau),\infty,\Gamma_1(N))\textgreater -B.
$$
\end{theorem}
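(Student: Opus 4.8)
The plan is to derive this criterion from the valence formula for modular functions on $\Gamma_1(N)$. First I would record the decisive structural fact about the building blocks: each generalized Dedekind eta function $\eta_{\delta,g}(\tau)$ in \eqref{generalized-eta} is, apart from the leading power of $q$, a product of factors $(1-q^m)$ with $m\geq 1$, and since $|q|=|e^{2\pi i\tau}|<1$ on $\mathcal{H}$ every such factor is holomorphic and nonvanishing there. Hence each eta-product $f_j(\tau)$ is holomorphic and nowhere zero on $\mathcal{H}$, and therefore $g(\tau)=\alpha_1 f_1+\cdots+\alpha_n f_n+1$ — being a finite $\mathbb{C}$-linear combination of modular functions on $\Gamma_1(N)$ together with the constant $1$, which is trivially such a function — is itself a modular function on $\Gamma_1(N)$ holomorphic on all of $\mathcal{H}$. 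Consequently $g$ descends to a meromorphic function on the compact Riemann surface $X_1(N)=\Gamma_1(N)\backslash\mathcal{H}^{*}$ whose only possible poles lie at the cusps.

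The backward implication is immediate: if $g\equiv 0$ then by convention $Ord(g,\infty,\Gamma_1(N))=+\infty>-B$, the number $B$ being a finite sum of finitely many finite cusp orders. For the forward implication I would argue by contraposition, assuming $g\not\equiv 0$ and showing $Ord(g,\infty,\Gamma_1(N))\leq -B$. The engine is the valence formula: a nonzero meromorphic function on a compact Riemann surface has a principal divisor of degree $0$, so summing the invariant orders of $g$ over all points of $X_1(N)$ gives zero,
\[
Ord(g,\infty,\Gamma_1(N))+\sum_{s\neq\infty}Ord(g,s,\Gamma_1(N))+\sum_{P\in\mathcal{H}}ord_P(g)=0,
\]
where the last sum runs over the finitely many $\Gamma_1(N)$-orbits in the interior and $ord_P(g)$ is the order of $g$ at the image of $P$ on $X_1(N)$. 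Because $g$ is holomorphic on $\mathcal{H}$, every interior term satisfies $ord_P(g)\geq 0$.

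It remains to bound the finite-cusp contribution from below, and here the constant term is accounted for exactly as in the definition of $B$. For any cusp $s$ the order of a sum is at least the minimum of the orders of its summands; treating the constant $1$ as an additional summand of order $0$ at every cusp, one obtains $Ord(g,s,\Gamma_1(N))\geq\min(\{Ord(f_j,s,\Gamma_1(N)):1\leq j\leq n\}\cup\{0\})$. Summing over the inequivalent cusps $s\neq\infty$ of $\mathcal{S}_N$ gives $\sum_{s\neq\infty}Ord(g,s,\Gamma_1(N))\geq B$ by \eqref{B}. Substituting this together with the nonnegativity of the interior orders into the valence identity forces $Ord(g,\infty,\Gamma_1(N))\leq -B$, contradicting the hypothesis $Ord(g,\infty,\Gamma_1(N))>-B$; hence $g\equiv 0$, which establishes the equivalence.

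The proof itself is short, so the only genuine obstacle is bookkeeping of the order normalization. One must use the convention for $Ord(\,\cdot\,,s,\Gamma_1(N))$ as the order taken in a local uniformizer at the image of the cusp on $X_1(N)$, which incorporates the fan width from Theorem \ref{Thm-2.10}(iii) and any ramification, so that the valence formula genuinely reads ``total order $=0$'' rather than $=k/12$ for a nonzero weight. One should likewise verify that the $f_j$ contribute no fractional orders at interior elliptic points that could spoil the sign of the estimate; since they are nonvanishing on $\mathcal{H}$, their interior orders on $X_1(N)$ are automatically nonnegative, and $g$ inherits the same. With the order convention fixed, the minimum-of-orders inequality and the degree-zero property of principal divisors do all the work.
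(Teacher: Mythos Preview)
The paper does not supply its own proof of this statement: Theorem~\ref{Thm-2.12} is quoted verbatim as \cite[Corollary~2.5]{FrGa19} and used as a black box in Section~\ref{proof}, so there is nothing in the paper to compare your argument against. Your proof is the standard one via the valence formula (equivalently, the degree-zero property of principal divisors on the compact surface $X_1(N)$), and it is correct; this is essentially the same reasoning Frye and Garvan give in the cited reference.

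Two small remarks. First, your labeling of ``forward'' and ``backward'' is swapped: the trivial direction is $g\equiv 0\Rightarrow Ord(g,\infty,\Gamma_1(N))>-B$, and the substantive contrapositive you prove is $g\not\equiv 0\Rightarrow Ord(g,\infty,\Gamma_1(N))\leq -B$. The logic is unaffected. Second, your closing caution about elliptic points is unnecessary here: for $N\geq 4$ the group $\Gamma_1(N)$ is torsion-free, so $X_1(N)$ has no elliptic points and the interior orders are honest nonnegative integers without any fractional correction.
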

A more comprehensive description can be found in \cite{FrGa19}. We have utilized a MAPLE package known as thetaids, which facilitates the implementation of the aforementioned algorithm. See
\begin{align*}
\href{http://qseries/org/fgarvan/qmaple/thetaids/}{http://qseries/org/fgarvan/qmaple/thetaids/}
\end{align*}

\subsection{Some lemmas}
In the subsequent lemmas, we establish key identities that are instrumental for the proof of Theorem \ref{main-thm}. Lemma \ref{s1-eta} is employed to derive identities involving $S_m(a,k,l,j)$ with $j=1$, while Lemma \ref{s2-eta} is utilized to construct identities involving $S_m(a,k,l,j)$ with $j=2$.

\begin{lemma}\label{s1-eta}
We have
\begin{align}
&\frac{J_1^2}{[b_1]_{\infty}}=\sum_k \frac{(-1)^kq^{\frac{k(k+1)}{2}}}{1-b_1q^k},\label{L-2-1}\\
&\frac{[a_1]_{\infty} J_1^2}{[b_1,b_2]_{\infty}}=\frac{[a_1/b_1]_{\infty}}{[b_2/b_1]_{\infty}}\sum_k \frac{(-1)^kq^{\frac{k(k+1)}{2}}}{1-b_1q^k}(\frac{a_1}{b_2})^k+idem(b_1;b_2)\label{L-2-2},\\
&\frac{[a_1]_{\infty}}{[b_1]_{\infty}}\{X(a_1;q)-X(b_1;q)\}\label{L-2-3}\\
=&\frac{[a_1/b_1]_{\infty}}{[1/b_1]_{\infty}}\sum_k \frac{(-1)^kq^{\frac{k(k+1)}{2}}}{1-b_1q^k}{a_1}^k+\frac{[a_1]_{\infty}}{[b_1]_{\infty}}{\sum_k}' \frac{(-1)^kq^{\frac{k(k+1)}{2}}}{1-q^k}(\frac{a_1}{b_1})^k.\nonumber
\end{align}
\end{lemma}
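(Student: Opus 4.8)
The plan is to prove the three identities by a single method: fix $q$ with $|q|<1$, regard each side as a function of one distinguished variable on $\mathbb{C}^\ast$, and use that a function holomorphic on $\mathbb{C}^\ast$ whose behaviour under $z\mapsto qz$ and whose principal parts are prescribed is uniquely determined. The only analytic inputs are the Jacobi triple product, which in the present notation reads
\begin{align*}
\sum_n(-1)^nq^{\frac{n(n-1)}{2}}z^n=J_1[z]_\infty,
\end{align*}
together with its consequences $[qz]_\infty=-z^{-1}[z]_\infty$ and $[1/z]_\infty=-z^{-1}[z]_\infty$. First I would prove \eqref{L-2-1}, the base case. Taking $z=b_1$, set $f(z)=J_1^2/[z]_\infty$ and let $g(z)$ denote the right-hand series. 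The relation $[qz]_\infty=-z^{-1}[z]_\infty$ gives $f(qz)=-zf(z)$; for $g$, the shift $k\mapsto k+1$ together with $\sum_k(-1)^kq^{\frac{k(k-1)}{2}}=J_1[1]_\infty=0$ gives $g(qz)=-zg(z)$. Both have simple poles exactly at $z=q^{-k}$; a one-line residue computation at $z=1$ yields $-1$ on each side, and the shared transformation law propagates this to every pole. Thus $f-g$ is holomorphic on $\mathbb{C}^\ast$ with $F(qz)=-zF(z)$, which forces $F\equiv0$ since its Laurent coefficients would have to grow like $q^{-k(k+1)/2}$.

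Next I would fix $b_1,b_2$ and treat both sides of \eqref{L-2-2} as functions of $a_1$. The left side is $[a_1]_\infty J_1^2/[b_1,b_2]_\infty$, holomorphic on $\mathbb{C}^\ast$ and satisfying $F(qa_1)=-a_1^{-1}F(a_1)$; the space of such functions is one-dimensional, spanned by $[a_1]_\infty$ (the Laurent coefficients obey $c_{n+1}=-q^nc_n$, leaving one free parameter). The point is to show the right side $R(a_1)$ lies in this space. Writing $\Sigma(a_1;b_1,b_2)=\sum_k(-1)^kq^{\frac{k(k+1)}{2}}(a_1/b_2)^k/(1-b_1q^k)$, the decomposition $q^k/(1-b_1q^k)=b_1^{-1}\big(1/(1-b_1q^k)-1\big)$ and the triple product in the form $\sum_k(-1)^kq^{\frac{k(k+1)}{2}}z^k=-z^{-1}J_1[z]_\infty$ give
\begin{align*}
\Sigma(qa_1;b_1,b_2)=b_1^{-1}\Sigma(a_1;b_1,b_2)+\frac{b_2}{a_1b_1}J_1[a_1/b_2]_\infty.
\end{align*}
Combined with $[qa_1/b_1]_\infty=-(b_1/a_1)[a_1/b_1]_\infty$, this shows each of the two \emph{idem}-related terms transforms into $-a_1^{-1}$ times itself plus an inhomogeneous piece proportional to $[a_1/b_1]_\infty[a_1/b_2]_\infty$; the two inhomogeneous pieces cancel exactly because reflection gives $b_2[b_1/b_2]_\infty=-b_1[b_2/b_1]_\infty$. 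Hence $R(qa_1)=-a_1^{-1}R(a_1)$, so $R(a_1)=c[a_1]_\infty$. Evaluating at $a_1=b_2$—where the second \emph{idem}-term vanishes because $[1]_\infty=0$ and the first collapses to $J_1^2/[b_1]_\infty$ by \eqref{L-2-1}—pins down $c=J_1^2/[b_1,b_2]_\infty$, giving \eqref{L-2-2}.

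For \eqref{L-2-3} I would argue in the same spirit, exploiting that \eqref{L-2-3} is the confluent limit $b_2\to1$ of \eqref{L-2-2}; this both explains the appearance of $X$ and supplies the cleanest bookkeeping. The key structural fact is $X(a;q)=-a\,\frac{d}{da}\log[a]_\infty$, whence $X(qa;q)=X(a;q)+1$. Using this, the left side $L(a_1)=\frac{[a_1]_\infty}{[b_1]_\infty}\{X(a_1;q)-X(b_1;q)\}$ satisfies the inhomogeneous relation
\begin{align*}
L(qa_1)=-a_1^{-1}L(a_1)-a_1^{-1}\frac{[a_1]_\infty}{[b_1]_\infty}.
\end{align*}
I would then verify that the right side satisfies the same relation: its first summand is the $b_2\to1$ value of the first summand of \eqref{L-2-2}, while the primed sum records the $k\neq0$ terms that survive once the $k=0$ term $1/(1-b_2)$ of the \emph{idem}-summand collides with the simple pole of $1/[b_2]_\infty$; since $[b_2]_\infty\sim(1-b_2)J_1^2$ these poles match, and differentiating the theta-quotients that multiply them produces precisely $X(a_1;q)-X(b_1;q)$. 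The difference of the two sides is then a holomorphic solution of the homogeneous equation $F(qa_1)=-a_1^{-1}F(a_1)$ that vanishes at $a_1=b_1$ (both sides do, using $[1]_\infty=0$ and the antisymmetry of the primed sum), hence is identically zero.

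The main obstacle throughout is the functional-equation bookkeeping rather than any new idea. In \eqref{L-2-2} the delicate step is the exact cancellation of the two inhomogeneous terms, which rests entirely on the reflection identity $[1/z]_\infty=-z^{-1}[z]_\infty$; in \eqref{L-2-3} it is the careful separation of the colliding simple poles at $b_2=1$ and the identification of the finite remainder with $X(a_1;q)-X(b_1;q)$. Once these are handled, the one-dimensionality of the relevant theta spaces and a single normalizing evaluation finish each identity.
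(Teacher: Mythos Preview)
Your argument is correct but takes a genuinely different route from the paper. The paper does not prove \eqref{L-2-1} and \eqref{L-2-2} directly at all: it simply identifies them as the special cases $r=0,s=1$ and $r=1,s=2$ of Chan's generalized Lambert series identity \cite[Theorem~2.1]{chan-2005}, and then obtains \eqref{L-2-3} by applying the operator $\big[\frac{\partial}{\partial b_2}(1-b_2)\big]_{b_2=1}$ to both sides of \eqref{L-2-2} and logarithmically differentiating the resulting theta-quotients. Your proof, by contrast, is self-contained: you establish each identity from scratch via the functional-equation and residue method---both sides satisfy the same quasi-periodicity in one variable, the relevant solution space is one-dimensional, and a single evaluation pins down the constant. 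For \eqref{L-2-3} the two approaches essentially coincide: your ``confluent limit $b_2\to1$'' is exactly the paper's differentiation operator, and your identification of the finite remainder with $X(a_1;q)-X(b_1;q)$ is precisely the logarithmic-derivative step the paper invokes.

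What each approach buys: the paper's is far shorter on the page, since it outsources the substance to Chan's theorem; yours is independent of that reference and makes transparent \emph{why} the identities hold (they are forced by the theta transformation law), at the cost of more bookkeeping---particularly the cancellation of the two inhomogeneous terms in \eqref{L-2-2}, which you correctly trace to the reflection identity $[1/z]_\infty=-z^{-1}[z]_\infty$. Your treatment of \eqref{L-2-3} is the sketchiest part (the verification that the right-hand side obeys the same inhomogeneous relation, and the matching of the colliding poles at $b_2=1$, are asserted rather than written out), but the ingredients are all present and routine to complete.
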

\begin{proof}
The first two identities correspond to the special cases with $r=0$, $s=1$ and $r=1$, $s=2$, respectively, as presented in \cite[Theorem 2.1]{chan-2005}. By applying the operator [$\frac{\partial}{\partial b_2}(1-b_2)|_{b_2=1}$] to both sides of \eqref{L-2-2} and logarithmically differentiating the infinite products, we obtain \eqref{L-2-3} after some lengthy but straightforward calculations.
\end{proof}

\begin{lemma}\label{s2-eta}
We obtain that
\begin{align}
&\frac{J_1^4}{[b_1,b_2]_{\infty}}=\frac{[1/b_1]_{\infty}}{[b_2/b_1]_{\infty}}\sum_k \frac{(-1)^{k+1}q^{\frac{k(k+1)}{2}}b_1}{(1-b_1q^k)^2}(\frac{q}{b_2})^k+idem(b_1;b_2),\label{L-3-1}\\
&\frac{J_1^2}{[b_1]_{\infty}}X(b_1;q)=\sum_k \frac{(-1)^{k}q^{\frac{k(k+1)}{2}}}{(1-b_1q^k)^2}b_1q^k,\label{L-3-2}\\
&\frac{J_1^4[a_2]_{\infty}}{[b_1,b_2,b_3]_{\infty}}=\frac{[1/b_1,a_2/b_1]_{\infty}}{[b_2/b_1,b_3/b_1]_{\infty}}\sum_k \frac{(-1)^{k+1}q^{\frac{k(k+1)}{2}}b_1}{(1-b_1q^k)^2}(\frac{a_2q}{b_2b_3})^k+idem(b_1;b_2,b_3),\label{L-3-3}\\
&\frac{[a_2/b_1]_{\infty}}{[b_2/b_1]_{\infty}}\sum_k\frac{(-1)^{k}q^{\frac{k(k+1)}{2}}b_1}{(1-b_1q^k)^2}(\frac{a_2}{b_2}q)^k\label{L-3-4}
+\frac{[a_2/b_2]_{\infty}}{[b_1/b_2]_{\infty}}\sum_k\frac{(-1)^{k}q^{\frac{k(k+1)}{2}}b_2}{(1-b_2q^k)^2}(\frac{a_2}{b_1}q)^k\\ \nonumber
=&-\frac{J_1^2[a_2]_{\infty}}{[b_1,b_2]_{\infty}}\{X(a_2;q)-X(b_1;q)-X(b_2;q)\},\\
&\frac{[a_2]_{\infty}}{[b_1]_{\infty}}{\sum_k}'\frac{(-1)^{k}q^{\frac{k(k+1)}{2}}}{(1-q^k)^2}(\frac{a_2q}{b_1})^k\label{L-3-5}
+\frac{[a_2/b_1]_{\infty}}{[1/b_1]_{\infty}}\sum_k\frac{(-1)^{k}q^{\frac{k(k+1)}{2}}b_1}{(1-b_1q^k)^2}(a_2q)^k\\ \nonumber
&=-\frac{[a_2]_{\infty}}{2[b_1]_{\infty}}\left\{-\sum_{n=1}^{\infty}\frac{2q^n}{(1-q^n)^2}+\mathcal{S}_1(a_2,b_1;q)[\mathcal{S}_1(a_2,b_1;q)+2]+\mathcal{S}_2(a_2,b_1;q)\right\}
\end{align}
where
\begin{align*}
&\mathcal{S}_1(a_2,b_1;q)=X(a_2;q)-X(b_1;q),\\
&\mathcal{S}_2(a_2,b_1;q)=-X(a_2;q)+X(b_1;q)-H(a_2;q)+H(b_1;q).
\end{align*}
\end{lemma}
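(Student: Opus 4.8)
The plan is to prove all five identities in parallel with the simple-pole identities of Lemma \ref{s1-eta}, treating \eqref{L-3-1}--\eqref{L-3-5} as their ``$j=2$'' (double-pole) counterparts and generating the double poles together with the functions $X(a;q)$ and $H(a;q)$ by differentiating with respect to the free parameters. First I would establish the two base identities \eqref{L-3-1} and \eqref{L-3-3}, whose left sides carry the factor $J_1^4=(q;q)_\infty^4$, as the double-pole cases of Chan's generalized Lambert-series identities \cite[Theorem 2.1]{chan-2005}; these stand in the same relation to \eqref{L-3-4}--\eqref{L-3-5} as \eqref{L-2-1}--\eqref{L-2-2} do to \eqref{L-2-3}, so that once they are in hand the remaining three follow by applying suitable differential and limiting operators.

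The identity \eqref{L-3-2} is the quickest: I would apply the Euler operator $b_1\frac{\partial}{\partial b_1}$ to \eqref{L-2-1}. On the right side $b_1\frac{\partial}{\partial b_1}\frac{1}{1-b_1q^k}=\frac{b_1q^k}{(1-b_1q^k)^2}$ produces exactly the double pole in \eqref{L-3-2}, while on the left one uses the key computation
\[
b_1\frac{\partial}{\partial b_1}\log[b_1]_\infty=-\sum_{k\ge 0}\Big(\frac{b_1q^k}{1-b_1q^k}-\frac{q^{k+1}/b_1}{1-q^{k+1}/b_1}\Big)=-X(b_1;q),
\]
obtained by logarithmically differentiating $[b_1]_\infty=(b_1;q)_\infty(q/b_1;q)_\infty$ and comparing with \eqref{X-def}. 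Hence $b_1\frac{\partial}{\partial b_1}\frac{J_1^2}{[b_1]_\infty}=\frac{J_1^2}{[b_1]_\infty}X(b_1;q)$, which is precisely \eqref{L-3-2}.

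For \eqref{L-3-4} I would mimic the passage from \eqref{L-2-2} to \eqref{L-2-3}, taking the confluence limit $b_3\to 1$ in \eqref{L-3-3}. As $b_3\to 1$ the left side and the $b_3$-term of the right side each develop a simple pole with the common residue $\tfrac{J_1^2[a_2]_\infty}{[b_1,b_2]_\infty}$ (since $[b_3]_\infty\sim(1-b_3)J_1^2$), so subtracting the singular $b_3$-term cancels the poles; the surviving finite parts reassemble, via the logarithmic derivatives of $[a_2]_\infty,[b_1]_\infty,[b_2]_\infty$, into the linear combination $X(a_2;q)-X(b_1;q)-X(b_2;q)$, while the $b_1$- and $b_2$-terms remain as the two double-pole sums on the left of \eqref{L-3-4}. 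Operationally this is the limiting operator $\left.\frac{\partial}{\partial b_3}\big[(1-b_3)(\,\cdot\,)\big]\right|_{b_3=1}$ used after isolating the pole, exactly as in Lemma \ref{s1-eta}.

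The genuinely hard identity is \eqref{L-3-5}, which I expect to be the main obstacle. It is a second-order object: its right side contains the quadratic expression $\mathcal{S}_1(a_2,b_1;q)\big[\mathcal{S}_1(a_2,b_1;q)+2\big]$ together with the $H$-function terms inside $\mathcal{S}_2(a_2,b_1;q)$. I would obtain it by a second confluence, letting $b_2\to 1$ in the already once-differentiated identity \eqref{L-3-4} (equivalently by a repeated application of the operator above). Because \eqref{L-3-4} already carries the factor $X$, this second limit acts on the $X$-functions themselves: a product of two first-order logarithmic derivatives appears and must be collected into the perfect square $\mathcal{S}_1[\mathcal{S}_1+2]$, whereas a bona fide second logarithmic derivative of $[a_2]_\infty$ and $[b_1]_\infty$ produces the functions $H(a_2;q)$ and $H(b_1;q)$ in $\mathcal{S}_2$ through the relation $H(a;q)=a\frac{\partial}{\partial a}X(a;q)$; the residual term $-\sum_{n\ge 1}2q^n/(1-q^n)^2$ is the regularized second derivative coming from the $k=0$ diagonal pole. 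The difficulty lies entirely in the bookkeeping: controlling the $0/0$ confluence limit, disentangling the cross-terms to recognize the square, and tracking signs so that the $X$- and $H$-contributions land in exactly the stated combination. The other four identities are comparatively routine once the correct operators are identified.
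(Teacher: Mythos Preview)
Your overall strategy is correct and matches the paper's: the base identities \eqref{L-3-1} and \eqref{L-3-3} are specializations of Chan's theorem, and \eqref{L-3-4}, \eqref{L-3-5} then follow by successive confluence limits $b_3\to 1$ and $b_2\to 1$. Two small points of divergence are worth flagging. First, the double-pole base identities are not cases of \cite[Theorem~2.1]{chan-2005} but of \cite[Theorem~2.3]{chan-2005}, where the squared denominator $(1-b_1q^k)^2$ arises from setting $a_1=1$ in the factor $(1-b_1q^k)(1-b_1q^k/a_1)$; this is only a citation slip, not a mathematical one. Second, your derivation of \eqref{L-3-2} by applying $b_1\tfrac{\partial}{\partial b_1}$ directly to \eqref{L-2-1} is a genuinely different (and shorter) route than the paper's, which instead applies the confluence operator $\big[\tfrac{\partial^2}{\partial b_2^2}(1-b_2)^2\big]_{b_2=1}$ to \eqref{L-3-1}; your version is cleaner because it needs only the elementary computation $b_1\tfrac{\partial}{\partial b_1}\log[b_1]_\infty=-X(b_1;q)$ and avoids an extra limit. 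Finally, a word of caution on \eqref{L-3-5}: as $b_2\to 1$ in \eqref{L-3-4} both sides develop a \emph{double} pole (the $b_2$-sum contributes $(1-b_2)^{-2}$ and the right side has $[b_2]_\infty^{-1}\cdot X(b_2;q)$), so the first-order operator $\big[\tfrac{\partial}{\partial b}(1-b)\,\cdot\,\big]_{b=1}$ that sufficed for \eqref{L-3-4} is not enough here---the paper uses $\big[\tfrac{\partial^2}{\partial b_2^2}(1-b_2)^2\,\cdot\,\big]_{b_2=1}$, and your phrase ``second confluence'' should be read in exactly this sense. With that adjustment your bookkeeping plan (square of first logarithmic derivatives giving $\mathcal S_1[\mathcal S_1+2]$, second logarithmic derivative giving $H$ via $H(a;q)=a\tfrac{\partial}{\partial a}X(a;q)$, and the diagonal $k=0$ term giving $-\sum_{n\ge1}2q^n/(1-q^n)^2$) is exactly right.
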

\begin{proof}
Recall  \cite[Theorem 2.3]{chan-2005}: for $r\textless s$,
\begin{align*}
&\frac{(a_1q,q/a_1;q)_{\infty}J_1^2[a_2,\cdots,a_r]_{\infty}}{[b_1,b_2,\cdots,b_s]_{\infty}}\\
=&\frac{[a_1/b_1,\cdots,a_r/b_1]_{\infty}}{[b_2/b_1,\cdots,b_s/b_1]_{\infty}}\sum_k\frac{(-1)^{(s-r)k+1}q^{(s-r)k(k+1)/2}b_1a_1^{-1}}{(1-b_1q^k)(1-b_1q^k/a_1)}\left(\frac{a_1\cdots a_sb_1^{s-r-1}q}{b_2\cdots b_s}\right)^k\\
&+idem(b_1;b_2,\cdots,b_s).
\end{align*}
Taking $r=1$, $s=2$ and $a_1=1$ in the above identity, we deduce equation \eqref{L-3-1}. Similarly, equation \eqref{L-3-3} emerges as a special case with $r=2$, $s=3$, and $a_1=1$. Applying the operator [$\frac{\partial^2}{\partial b_2^2}(1-b_2)^2|_{b_2=1}$] to both sides of \eqref{L-3-1} and simplifying, we arrive at \eqref{L-3-2}.

Applying the operator [$\frac{\partial^2}{\partial b_3^2}(1-b_3)^2|_{b_3=1}$] to both sides of \eqref{L-3-3} and simplifying, we obtain \eqref{L-3-4}. Subsequently, applying the operator [$\frac{\partial^2}{\partial b_2^2}(1-b_2)^2|_{b_2=1}$] to both sides of \eqref{L-3-4} and simplifying, we derive \eqref{L-3-5}.
\end{proof}

\begin{lemma}\label{X-eta}
Recall that $X(a;q)$ is defined in \eqref{X-def}, we have
\begin{align*}
X(q;q^{11})=\frac{1}{242}&\left(\frac{81J_{2,11}^3J_{11}^3}{J_{1,11}^3J_{3,11}}-\frac{9J_{4,11}^3J_{11}^3}{J_{2,11}^3J_{5,11}}+\frac{27J_{5,11}^3J_{11}^3}{J_{3,11}^3J_{2,11}}
-\frac{qJ_{3,11}^3J_{11}^3}{J_{4,11}^3J_{1,11}}-\frac{3q^{4}J_{1,11}^3J_{11}^3}{J_{5,11}^3J_{4,11}}-99\right),\nonumber\\
X(q^{2};q^{11})=\frac{1}{242}&\left(-\frac{3J_{2,11}^3J_{11}^3}{J_{1,11}^3J_{3,11}}+\frac{81J_{4,11}^3J_{11}^3}{J_{2,11}^3J_{5,11}}-\frac{J_{5,11}^3J_{11}^3}{J_{3,11}^3J_{2,11}}
+\frac{9qJ_{3,11}^3J_{11}^3}{J_{4,11}^3J_{1,11}}+\frac{27q^{4}J_{1,11}^3J_{11}^3}{J_{5,11}^3J_{4,11}}-77\right),\nonumber\\
X(q^{3};q^{11})=\frac{1}{242}&\left(\frac{J_{2,11}^3J_{11}^3}{J_{1,11}^3J_{3,11}}-\frac{27J_{4,11}^3J_{11}^3}{J_{2,11}^3J_{5,11}}+\frac{81J_{5,11}^3J_{11}^3}{J_{3,11}^3J_{2,11}}
-\frac{3qJ_{3,11}^3J_{11}^3}{J_{4,11}^3J_{1,11}}-\frac{9q^{4}J_{1,11}^3J_{11}^3}{J_{5,11}^3J_{4,11}}-55\right),\nonumber\\
X(q^{4};q^{11})=\frac{1}{242}&\left(\frac{27J_{2,11}^3J_{11}^3}{J_{1,11}^3J_{3,11}}-\frac{3J_{4,11}^3J_{11}^3}{J_{2,11}^3J_{5,11}}+\frac{9J_{5,11}^3J_{11}^3}{J_{3,11}^3J_{2,11}}
-\frac{81qJ_{3,11}^3J_{11}^3}{J_{4,11}^3J_{1,11}}-\frac{q^{4}J_{1,11}^3J_{11}^3}{J_{5,11}^3J_{4,11}}-33\right),\nonumber\\
X(q^{5};q^{11})=\frac{1}{242}&\left(\frac{9J_{2,11}^3J_{11}^3}{J_{1,11}^3J_{3,11}}-\frac{J_{4,11}^3J_{11}^3}{J_{2,11}^3J_{5,11}}+\frac{3J_{5,11}^3J_{11}^3}{J_{3,11}^3J_{2,11}}
-\frac{27qJ_{3,11}^3J_{11}^3}{J_{4,11}^3J_{1,11}}-\frac{81q^{4}J_{1,11}^3J_{11}^3}{J_{5,11}^3J_{4,11}}-11\right)\nonumber.
\end{align*}
\end{lemma}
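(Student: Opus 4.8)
The plan is to prove each of the five identities by recognising both sides, after restoring a suitable additive constant, as holomorphic modular forms of weight $1$ on $\Gamma_1(11)$, and then deducing the identity from a finite comparison of $q$-expansion coefficients, in the circle of ideas behind Theorems \ref{Thm-2.10}--\ref{Thm-2.12} and the MAPLE package thetaids. For the left side, specialising \eqref{X-def} to $q\mapsto q^{11}$ and first variable $q^{a}$ gives
\[
X(q^{a};q^{11})=\sum_{m\equiv a\ (11)}\frac{q^{m}}{1-q^{m}}-\sum_{m\equiv -a\ (11)}\frac{q^{m}}{1-q^{m}},
\]
the sums running over positive integers $m$ in the indicated residue classes modulo $11$. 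By orthogonality of Dirichlet characters only the five odd characters $\chi$ mod $11$ contribute, so
\[
X(q^{a};q^{11})=\frac{1}{5}\sum_{\chi\ \mathrm{odd}}\overline{\chi}(a)\sum_{n\ge 1}\Bigl(\sum_{d\mid n}\chi(d)\Bigr)q^{n},
\]
and adding the appropriate constant, which a short computation with generalized Bernoulli numbers evaluates to $\tfrac{11-2a}{22}$, turns the right-hand side into the Eisenstein series $\tfrac{1}{5}\sum_{\chi\ \mathrm{odd}}\overline{\chi}(a)E_{1,\chi}$, a holomorphic modular form of weight $1$ on $\Gamma_1(11)$, holomorphic at every cusp.

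For the right side, each of the five building blocks $\frac{J_{2,11}^{3}J_{11}^{3}}{J_{1,11}^{3}J_{3,11}}$, $\frac{J_{4,11}^{3}J_{11}^{3}}{J_{2,11}^{3}J_{5,11}}$, $\frac{J_{5,11}^{3}J_{11}^{3}}{J_{3,11}^{3}J_{2,11}}$, $q\frac{J_{3,11}^{3}J_{11}^{3}}{J_{4,11}^{3}J_{1,11}}$, $q^{4}\frac{J_{1,11}^{3}J_{11}^{3}}{J_{5,11}^{3}J_{4,11}}$ — call them $B_{1},\dots,B_{5}$ — is, when rewritten through the generalized Dedekind eta functions $\eta_{11,g}$ and $\eta(11\tau)$, a generalized eta-quotient of level $11$ and weight $1$; Robins's criterion \cite{R094} confirms modularity on $\Gamma_1(11)$, and each $B_{i}$ is holomorphic and non-vanishing on $\mathcal{H}$. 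Using Theorem \ref{Thm-2.11} together with the cusp list of Theorem \ref{Thm-2.10}, I would compute the invariant order of each $B_{i}$ at every cusp of $\Gamma_1(11)$; this shows that, once the constants $99,77,55,33,11$ are restored (they affect only the value at $\infty$), each stated combination — $\tfrac{1}{242}\bigl(81B_{1}-9B_{2}+27B_{3}-B_{4}-3B_{5}\bigr)$ for $a=1$, and analogously for $a=2,3,4,5$ — is itself a holomorphic modular form of weight $1$ on $\Gamma_1(11)$.

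It then remains to show that the two weight-$1$ holomorphic forms $\tfrac{1}{5}\sum_{\chi}\overline{\chi}(a)E_{1,\chi}$ and $\tfrac{1}{242}\bigl(81B_{1}-9B_{2}+\cdots\bigr)$ on $\Gamma_1(11)$ coincide; their difference is again a holomorphic weight-$1$ form, hence vanishes identically once it vanishes to order exceeding the valence bound at $\infty$ — that is, once finitely many leading coefficients of the original identity agree, which is exactly the finite check carried out by thetaids (compare Theorem \ref{Thm-2.12}). I expect the principal obstacle to be the cusp analysis of the right side: the individual blocks $B_{i}$ genuinely have zeros and poles at the non-$\infty$ cusps of $\Gamma_1(11)$, so one must track their invariant orders carefully via Theorems \ref{Thm-2.10}--\ref{Thm-2.11} to check that they combine to a holomorphic form (and, should holomorphy fail, to compute the constant $B$ of \eqref{B} correctly before invoking Theorem \ref{Thm-2.12}). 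A subsidiary conceptual point is the Eisenstein identification of the left side with exactly the constant $\tfrac{11-2a}{22}$; this is what allows $X(q^{a};q^{11})$ to be treated inside the modular-forms framework, since $X$ by itself is \emph{not} a generalized eta-product. A purely $q$-series alternative would instead begin from \eqref{L-2-3} with $q\mapsto q^{11}$, $a_{1}=q^{a}$, $b_{1}=q^{11-a}$, combined with the symmetry $X(q^{11-a};q^{11})=-X(q^{a};q^{11})$: this expresses $2X(q^{a};q^{11})$ in terms of $J_{11}$-type theta products and bilateral sums $\ {\sum_k}'(-1)^{k}q^{11k(k+1)/2+ck}/(1-q^{11k})$, after which identifying the resulting theta quotients and fixing the additive constants is the bulk of the work.
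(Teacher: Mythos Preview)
Your modular-forms route is sound in outline, but the paper's own argument is far shorter and avoids all cusp analysis. The authors do not treat $X(q^{a};q^{11})$ as an Eisenstein series at all; instead they invoke the single $q$-series identity \eqref{X-trans},
\[
\frac{J_1^2[x^2]_\infty^3}{[x]_\infty^3[x^3]_\infty}=1+3X(x;q)-X(x^3;q),
\]
and specialise $(q,x)\mapsto(q^{11},q),(q^{11},q^{2}),\dots,(q^{11},q^{5})$. Using $X(q^{11-a};q^{11})=-X(q^{a};q^{11})$ this yields the five linear relations
\[
3X(1)-X(3)+1=B_1,\quad 3X(2)+X(5)+1=B_2,\quad 3X(3)+X(2)+1=B_3,
\]
\[
3X(4)-X(1)=-B_4,\qquad 3X(5)-X(4)=-B_5,
\]
in your notation for $B_1,\dots,B_5$; inverting this $5\times 5$ system (determinant $242$) gives the lemma immediately. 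No modularity, no valence bound, no coefficient check.

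Your approach would certainly work, and it is the more conceptual one: it explains \emph{why} the $X(q^{a};q^{11})$ sit in the weight-$1$ Eisenstein space for $\Gamma_1(11)$ and why exactly five theta-quotients suffice to span it. The price is the cusp bookkeeping you flag as the main obstacle, plus an appeal (at weight $1$) to a Sturm-type bound rather than to Theorem~\ref{Thm-2.12} verbatim, which is stated for modular \emph{functions}. The paper's method trades that insight for a two-line linear-algebra computation once \eqref{X-trans} is granted.
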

\begin{proof}
From \cite[Eq.(2.16)]{Mao-main-eta-2022}, we have the following identity:
\begin{align}\label{X-trans}
\frac{J_1^2[x^2]_{\infty}^3 }{[x]_{\infty}^3[x^3]_{\infty}}
=1+3\sum_{n=0}^{\infty}\left(\frac{xq^n}{1-xq^n}-\frac{q^{n+1}/x}{1-q^{n+1}/x}\right)-\sum_{n=0}^{\infty}\left(\frac{x^3q^n}{1-x^3q^n}-\frac{q^{n+1}/x^3}{1-q^{n+1}/x^3}\right).
\end{align}
By replacing $q$  with $q^{11}$ and $x$ with $q$ in \eqref{X-trans}, and rearranging it into the form of $X(a;q)$, we obtain
\begin{align*}
3X(q;q^{11})-X(q^{3};q^{11})+1=\frac{J_{2,11}^3J_{11}^3}{J_{1,11}^3J_{3,11}}.
\end{align*}
Similarly, by substituting suitable values for $x$ and $q$, we derive the following four equations:
\begin{align*}
&3X(q^{2};q^{11})+X(q^{5};q^{11})+1=\frac{J_{4,11}^3J_{11}^3}{J_{2,11}^3J_{5,11}},\\
&3X(q^{3};q^{11})+X(q^{2};q^{11})+1=\frac{J_{5,11}^3J_{11}^3}{J_{3,11}^3J_{2,11}},\\
&3X(q^{4};q^{11})-X(q;q^{11})=-\frac{qJ_{3,11}^3J_{11}^3}{J_{4,11}^3J_{1,11}},\\
&3X(q^{5};q^{11})-X(q^{4};q^{11})=-\frac{q^{4}J_{1,11}^3J_{11}^3}{J_{5,11}^3J_{4,11}}.
\end{align*}
It is straightforward to deduce Lemma \ref{X-eta} from these equations.
\end{proof}

\begin{lemma}\label{S-eta}
Recall that $H(a;q)$ is defined in \eqref{H-def}, we get
\begin{align}
H(q^{b};q^{11})-H(q^{c};q^{11})=-\frac{q^{c}J_{b+c,11}J_{b-c,11}J_{11}^6}{J_{b,11}^2J_{c,11}^2},
\end{align}
where $b$, $c$, and $b-c$ are integers not divisible by 11.
\end{lemma}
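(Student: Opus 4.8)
The plan is to deduce Lemma~\ref{S-eta} from the master identity
\begin{equation*}
H(a;q)-H(z;q)=-z\,\frac{(q;q)_\infty^{4}\,[az]_\infty\,[a/z]_\infty}{[a]_\infty^{2}\,[z]_\infty^{2}},
\end{equation*}
valid for $0<|q|<1$ and generic $a,z\in\mathbb{C}^{*}$, and then to specialize $q\mapsto q^{11}$, $a\mapsto q^{b}$, $z\mapsto q^{c}$. Under this specialization $[q^{a};q^{11}]=(q^{a},q^{11-a};q^{11})_\infty=J_{a,11}/J_{11}$, so the brackets contribute $J_{b+c,11}J_{b-c,11}/(J_{b,11}^{2}J_{c,11}^{2})$, the various powers of $J_{11}$ combine to $J_{11}^{6}$, and $-z$ becomes $-q^{c}$, which is exactly the claimed formula. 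The hypotheses that $b$, $c$, $b-c$ are not divisible by $11$ ensure that $a$, $z$, $a/z$ lie outside $q^{11\mathbb{Z}}$, so all brackets and the double pole of $H$ are non-degenerate; in the remaining case $b+c\in 11\mathbb{Z}$ both sides vanish by periodicity, so the identity still holds.

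To prove the master identity, fix $z$ and view both sides as functions of $a$. The first observation is the quasi-period $a\mapsto qa$: reindexing the bilateral sum gives $H(qa;q)=H(a;q)$, and $[qx]_\infty=-x^{-1}[x]_\infty$ shows the right-hand side is likewise invariant, so both sides descend to meromorphic functions on the elliptic curve $\mathbb{C}^{*}/q^{\mathbb{Z}}$. Next I would compute their divisors there. On the right, $[a]_\infty^{2}$ produces a double pole at $a=1$ while $[az]_\infty$ and $[a/z]_\infty$ produce simple zeros at $a=z^{-1}$ and $a=z$. On the left, only the $n=0$ term $a/(1-a)^{2}=(1-a)^{-2}-(1-a)^{-1}$ of $H(a;q)$ is singular, so $H(a;q)-H(z;q)$ has a double pole at $a=1$; and since $H(z^{-1};q)=H(z;q)$ (again by reindexing), it vanishes at $a=z$ and $a=z^{-1}$. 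A meromorphic function on the curve with a single double pole has exactly two zeros, so the two sides have the same zeros and poles with the same multiplicities; hence their ratio is holomorphic on the curve, therefore constant by Liouville. The constant is pinned down by the leading Laurent coefficient at $a=1$: the left side is asymptotic to $(1-a)^{-2}$, and expanding $(a;q)_\infty=(1-a)(aq;q)_\infty$ and using $[z^{-1}]_\infty=-z^{-1}[z]_\infty$ shows the right side is also asymptotic to $(1-a)^{-2}$, so the constant is $1$.

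The step needing the most care is this divisor/principal-part bookkeeping: one must confirm that $H(a;q)-H(z;q)$ has no zeros beyond $a=z,z^{-1}$ (automatic once its pole divisor is known) and that the coefficients of $(1-a)^{-2}$ on the two sides match exactly, so that the ratio is genuinely pole-free rather than merely less singular; everything else---the reindexing identities and the rewriting of brackets as quotients of $J_{a,11}$'s---is routine. As a sanity check, the master identity is the $q$-analogue of Weierstrass's relation $\wp(u)-\wp(v)=-\sigma(u+v)\sigma(u-v)/\bigl(\sigma(u)^{2}\sigma(v)^{2}\bigr)$ under $a=e^{2\pi i u}$, $z=e^{2\pi i v}$; alternatively it can be extracted from the Lambert-series identities of Lemma~\ref{s2-eta} together with the observation $H(a;q)=\bigl(a\,\partial_a\bigr)X(a;q)$, which follows by termwise application of $a\,\partial_a$ to the definition of $X(a;q)$.
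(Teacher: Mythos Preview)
Your argument is correct. The master identity, the quasi-periodicity checks, the divisor count on $\mathbb{C}^{*}/q^{\mathbb{Z}}$, and the leading-Laurent-coefficient computation at $a=1$ all go through as you describe, and the specialization to $q\mapsto q^{11}$, $a\mapsto q^{b}$, $z\mapsto q^{c}$ produces exactly the stated formula.

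However, your route is genuinely different from the paper's. The paper does not prove the $H(a;q)-H(z;q)$ identity from scratch; it simply specializes Chan's generalized Lambert series identity \cite[Theorem~2.3]{chan-2005} with parameters $(r,s,a_1,a_2,b_1,b_2)=(2,2,1,q^{b+c},q^{b},q^{c})$ and base $q\mapsto q^{11}$, obtaining
\[
-\frac{J_{11}^6 J_{b+c,11}}{J_{b,11}J_{c,11}}
=\frac{J_{-b,11}J_{c,11}}{J_{c-b,11}}H(q^{b};q^{11})+\frac{J_{-c,11}J_{b,11}}{J_{b-c,11}}H(q^{c};q^{11}),
\]
and then rearranges using $J_{-a,11}=-q^{-a}J_{a,11}$ and $J_{c-b,11}=-q^{-b}J_{b-c,11}$. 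So the paper's proof is a one-line citation plus bookkeeping, whereas you reprove the underlying theta identity via an elliptic divisor/Liouville argument---essentially the $q$-analogue of the Weierstrass $\wp$-difference formula, as you note. Your approach is more self-contained and explains \emph{why} the identity holds; the paper's is shorter but depends on an external reference. Amusingly, the alternative you sketch in your last sentence (extracting the identity from the Lambert-series machinery behind Lemma~\ref{s2-eta}) is close in spirit to what the paper actually does.
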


\begin{proof}
Let $(r,s,q,a_1,a_2,b_1,b_2)\rightarrow(2,2,q^{11},1,q^{b+c},q^{b},q^{c})$ in \cite[Theorem 2.3]{chan-2005}, where $b$, $c$, and $b-c$ are integers not divisible by $11$. Then, we have
\begin{align}
-\frac{J_{11}^6 J_{b+c,11}}{J_{b,11}J_{c,11}}
=\frac{J_{-b,11}J_{c,11}}{J_{c-b,11}}H(q^{b};q^{11})+\frac{J_{-c,11}J_{b,11}}{J_{b-c,11}}H(q^{c};q^{11}).
\end{align}
With the help of the properties of $J_{a,b}$, we complete the proof of Lemma \ref{S-eta}.
\end{proof}

\section{The proof of Theorem 1}\label{proof}
We require the following generating functions.
\begin{lemma}\cite[p.9]{Mao-Generating-2024}\label{generating function}
For $1 \leq b \leq k-1$, we have
\begin{align}\label{Generating-1}
&\sum_{n=0}^{\infty}(M_{\omega}(b,k,n)-M_{\omega}(k-b,k,n))q^n\\
=&\frac{k}{(q)_{\infty}}{\sum_n} '\frac{(-1)^nq^{\frac{n(n+1)}{2}+(b-1)n}(1-q^n)}{(1-q^{kn})^2}
-\frac{k-b}{(q)_{\infty}}{\sum_n}'\frac{(-1)^nq^{\frac{n(n+1)}{2}+(b-1)n}(1-q^n)}{1-q^{kn}}.\nonumber
\end{align}
\end{lemma}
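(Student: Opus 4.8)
The plan is to first derive the bivariate generating function refining the total number of ones by the crank, and then to isolate the residue classes $b$ and $k-b\pmod k$ by a roots-of-unity filter. Write $C(z,q)=\frac{(q)_\infty}{(zq;q)_\infty(z^{-1}q;q)_\infty}$ for the ordinary crank generating function and track $\omega(\pi)$ with an auxiliary variable $d$. I would split each partition into its ones and its parts $\ge 2$: if $\pi$ has $\omega\ge1$ ones and non-one part $\sigma$, then $\crank(\pi)=\#\{\text{parts of }\sigma\text{ exceeding }\omega\}-\omega$, so grouping by $\omega$ and summing the geometric series in the multiplicities of $\sigma$ gives
\begin{align*}
\sum_{\pi}z^{\crank(\pi)}d^{\omega(\pi)}q^{|\pi|}=C_0(z,q)+\sum_{\omega\ge1}\frac{d^\omega z^{-\omega}q^\omega}{(q^2;q)_{\omega-1}\,(zq^{\omega+1};q)_\infty},
\end{align*}
where $C_0$ collects the $\omega=0$ partitions and is free of $d$. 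Differentiating in $d$ at $d=1$ annihilates $C_0$; after rewriting $(zq^{\omega+1};q)_\infty^{-1}=(zq;q)_\omega/(zq;q)_\infty$ and $(q^2;q)_{\omega-1}=(q;q)_\omega/(1-q)$, the surviving sum is one application of the $q$-binomial theorem followed by the operator $t\frac{d}{dt}$. This should yield the closed form
\begin{align*}
\mathcal G(z,q):=\sum_{n,m}M_{\omega}(m,n)z^mq^n=C(z,q)\sum_{j\ge0}\left(\frac{z^{-1}q^{j+1}}{1-z^{-1}q^{j+1}}-\frac{q^{j+2}}{1-q^{j+2}}\right),
\end{align*}
which I would confirm on low-order coefficients (e.g.\ $[q^1]\mathcal G=z^{-1}$ from the partition $1$, and $[q^2]\mathcal G=2z^{-2}$ from $1+1$).

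Next I would apply the filter $\sum_n(M_{\omega}(b,k,n)-M_{\omega}(k-b,k,n))q^n=\frac1k\sum_{j=0}^{k-1}(\zeta^{-bj}-\zeta^{bj})\mathcal G(\zeta^j,q)$ with $\zeta=e^{2\pi i/k}$, where the $j=0$ term vanishes. Since $C(z,q)$ is invariant under $z\mapsto z^{-1}$ while ordinary crank counts satisfy $M(b,k,n)=M(k-b,k,n)$, the part of the Lambert sum in $\mathcal G$ that is independent of $z$ contributes nothing to this antisymmetric combination. Averaging the surviving sum against its image under $j\mapsto k-j$ then collapses the one-sided Lambert factor into a logarithmic derivative; because $C\cdot z\partial_z\ln C=z\partial_zC$, I expect the crucial structural identity
\begin{align*}
\sum_n(M_{\omega}(b,k,n)-M_{\omega}(k-b,k,n))q^n=-\frac1{2k}\sum_{j=0}^{k-1}(\zeta^{-bj}-\zeta^{bj})\,[z\partial_zC(z,q)]_{z=\zeta^j},
\end{align*}
which replaces the ones-weighting by a derivative of the crank generating function.

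I would then expand $z\partial_zC$ from the Lambert form $(q)_\infty C(z,q)=(1-z)\sum_n\frac{(-1)^nq^{n(n+1)/2}}{1-zq^n}$, which is \eqref{L-2-1} with $b_1=z$, producing a simple-pole group $\frac{z}{1-zq^n}$ and a double-pole group $\frac{z(1-z)q^n}{(1-zq^n)^2}$. Interchanging the $n$- and $j$-summations, the inner sums are the elementary evaluations $\sum_{j=0}^{k-1}\frac{\zeta^{aj}}{1-\zeta^jq^n}=\frac{k\,q^{n\langle-a\rangle}}{1-q^{kn}}$ together with its derivative analogue for $(1-\zeta^jq^n)^{-2}$, where $\langle-a\rangle$ denotes the least nonnegative residue of $-a$ modulo $k$; the residues that occur are $b-1$, $k-1-b$, $b-2$, $k-2-b$. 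Folding the index via $n\mapsto-n$ unifies the two exponent families into the single factor $q^{(b-1)n}$, and the identity $\frac{1+q^{kn}}{(1-q^{kn})^2}=\frac{2}{(1-q^{kn})^2}-\frac{1}{1-q^{kn}}$ splits the double-pole contribution into exactly the $\frac{k}{(q)_\infty}\sum'\cdots/(1-q^{kn})^2$ term plus a simple-pole remainder, the latter combining with the genuine simple-pole terms.

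The main obstacle is this final collection step: tracking all four residue classes through the $n\mapsto-n$ folding and verifying that the simple-pole contributions assemble to precisely $-\frac{k-b}{(q)_\infty}\sum'\frac{(-1)^nq^{n(n+1)/2+(b-1)n}(1-q^n)}{1-q^{kn}}$. In particular, the appearance of the \emph{asymmetric} coefficient $k-b$, rather than a symmetric function of $b$, is the signature of the ones-weighting and is exactly where a sign or residue slip would go undetected; I would pin it down by testing the closed form against $b=1$, where the left side is identically zero and the $|n|=1$ terms of the right side must cancel in pairs. Two secondary points require care: the $n=0$ term (equivalently $x=q^0=1$) must be handled separately and is what forces the restriction $\sum'$ to $n\neq0$, and the geometric-series evaluations need $|q|<1$ with $\zeta^j\neq1$, valid for $1\le j\le k-1$.
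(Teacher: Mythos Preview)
The paper does not prove this lemma; it is imported verbatim from \cite[Lemma 2.2]{Mao-Generating-2024} and used as a black box. So there is no in-paper argument to compare your proposal against, and it must be judged on its own merits.

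Your strategy is essentially correct. The refined generating function you write down for $\sum_\pi z^{\crank(\pi)}d^{\omega(\pi)}q^{|\pi|}$ is right (and your $q^1$, $q^2$ checks confirm the differentiated form). The key structural step is also valid: after the roots-of-unity filter the $z$-independent Lambert piece drops out by the ordinary crank symmetry $M(r,k,n)=M(k-r,k,n)$, and symmetrizing under $j\mapsto k-j$ collapses the surviving one-sided Lambert factor $L(z,q)=\sum_{m\ge0}\frac{z^{-1}q^{m+1}}{1-z^{-1}q^{m+1}}$ to $-z\partial_z\log C(z,q)$, yielding
\[
\sum_n\bigl(M_{\omega}(b,k,n)-M_{\omega}(k-b,k,n)\bigr)q^n=-\frac1{2k}\sum_{j=1}^{k-1}(\zeta^{-bj}-\zeta^{bj})\bigl[z\partial_zC(z,q)\bigr]_{z=\zeta^j}.
\]
From here, inserting the bilateral Lambert expansion of $C$ via \eqref{L-2-1} and evaluating the finite sums $\sum_j\zeta^{aj}/(1-\zeta^jq^n)$ and $\sum_j\zeta^{aj}/(1-\zeta^jq^n)^{2}$ will indeed produce the two families of terms in the statement; the $n\mapsto-n$ fold and the splitting $\frac{1+q^{kn}}{(1-q^{kn})^2}=\frac{2}{(1-q^{kn})^2}-\frac1{1-q^{kn}}$ are the right devices for the final collection.

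One genuine error to flag: your proposed sanity check at $b=1$ is wrong. The left side $\sum_n(M_\omega(1,k,n)-M_\omega(k-1,k,n))q^n$ is \emph{not} identically zero; you are conflating $M_\omega$ with the unweighted crank count $M$, for which $M(r,k,n)=M(k-r,k,n)$ does hold. Indeed, the nonvanishing of precisely these differences for $k=11$ is the content of the entire paper (see the Appendix formulas for $f_m(1)$). A safer low-order test: the partition $1$ of $n=1$ has $\omega=1$ and crank $-1\equiv k-1\pmod k$, so the $q^1$-coefficient of the left side at $b=1$ equals $M_\omega(1,k,1)-M_\omega(k-1,k,1)=0-1=-1$; verify the right side matches. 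This does not affect the validity of your argument, only of your check.
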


Here and later, we only consider the case where $k=11$ in the above equation, we can rewrite \eqref{Generating-1} as follows:
\begin{align}\label{generating-2}
\mathcal{F}(b)=&\sum_{n=0}^{\infty}(M_{\omega}(b,11,n)-M_{\omega}(11-b,11,n))q^n\\
=&\frac{11}{J_1}{\sum_n} '\frac{(-1)^nq^{\frac{n(n+1)}{2}+(b-1)n}(1-q^n)}{(1-q^{11n})^2}
-\frac{11-b}{J_1}{\sum_n}'\frac{(-1)^nq^{\frac{n(n+1)}{2}+(b-1)n}(1-q^n)}{1-q^{11n}}\nonumber\\
=&\frac{11}{J_1}\sum_{m=0}^{10}(S_m(b-1,11,1,2)-S_m(b,11,1,2))\nonumber\\
&-\frac{11-b}{J_1}\sum_{m=0}^{10}(S_m(b-1,11,1,1)-S_m(b,11,1,1)).\nonumber
\end{align}

According to Lemmas \ref{s1-eta} and \ref{s2-eta}, we derive the following two lemmas, which facilitate the transformation of \eqref{generating-2} into eta-quotients.
\begin{lemma}\label{1-eta}
Recall $S_0(a,k,l,j)$ and $S_m(a,k,l,j)$ as defined in \eqref{S0-def} and \eqref{Sm-def}, respectively, we have
\begin{align}
&{\sum_n}'\frac{(-1)^nq^{\frac{n(n+1)}{2}}}{1-q^{11n}}\label{cor-1-1}\\
=&\sum_{m=0}^{10}S_m(0,11,1,1)\nonumber \\
=&-q\frac{J_{44,121}}{J_{55,121}}[2X(4)+X(1)+X(2)+1]+q^3\frac{J_{33,121}}{J_{55,121}}[3X(3)+X(2)+1]-q^{15}\frac{J_{121}^3}{J_{55,121}}\nonumber \\
&-q^6\frac{J_{22,121}}{J_{55,121}}[2X(2)+X(3)+X(4)+1]+q^{10}\frac{J_{11,121}}{J_{55,121}}[2X(1)+X(4)+X(5)+1]\nonumber \\
&+X(1)+X(5),\nonumber \\
&{\sum_n}'\frac{(-1)^nq^{\frac{n(n+1)}{2}+n}}{1-q^{11n}}\label{cor-1-2}\\
=&\sum_{m=0}^{10}S_m(1,11,1,1)\nonumber \\
=&-q^2\frac{J_{33,121}}{J_{44,121}}[2X(3)+X(1)+X(4)+1]+q^{14}\frac{J_{121}^3}{J_{44,121}}+q^5\frac{J_{22,121}}{J_{44,121}}[3X(2)+X(5)+1]\nonumber \\
&-q^9\frac{J_{11,121}}{J_{44,121}}[2X(1)+X(3)-X(5)+1]-q^{-1}\frac{J_{55,121}}{J_{44,121}}[2X(5)+X(2)-X(1)]+X(3)+X(4),\nonumber \\
&{\sum_n}'\frac{(-1)^nq^{\frac{n(n+1)}{2}+2n}}{1-q^{11n}}\nonumber\\
=&\sum_{m=0}^{10}S_m(2,11,1,1)\nonumber \\
=&-q^{12}\frac{J_{121}^3}{J_{33,121}}-q^3\frac{J_{22,121}}{J_{33,121}}[2X(2)+X(1)-X(5)+1]+q^7\frac{J_{11,121}}{J_{33,121}}[2X(1)+X(2)-X(4)+1]\nonumber \\
&+q^{-3}\frac{J_{55,121}}{J_{33,121}}[2X(5)+X(3)-X(2)]-q^{-2}\frac{J_{44,121}}{J_{33,121}}[3X(4)-X(1)]+X(3)+X(5),\nonumber \\
&{\sum_n}'\frac{(-1)^nq^{\frac{n(n+1)}{2}+3n}}{1-q^{11n}}\nonumber\\
=&\sum_{m=0}^{10}S_m(3,11,1,1)\nonumber \\
=&-q^4\frac{J_{11,121}}{J_{22,121}}[3X(1)-X(3)+1]-q^{-6}\frac{J_{55,121}}{J_{22,121}}[2X(5)+X(4)-X(3)]+q^{-5}\frac{J_{44,121}}{J_{22,121}}[2X(4)\nonumber \\
&+X(5)-X(2)]-q^{-3}\frac{J_{33,121}}{J_{22,121}}[2X(3)-X(1)-X(5)]+q^9\frac{J_{121}^3}{J_{22,121}}+X(2)-X(4),\nonumber \\
&{\sum_n}'\frac{(-1)^nq^{\frac{n(n+1)}{2}+4n}}{1-q^{11n}}\nonumber\\
=&\sum_{m=0}^{10}S_m(4,11,1,1) \nonumber\\
=&q^{-10}\frac{J_{55,121}}{J_{11,121}}[3X(5)-X(4)]-q^{-9}\frac{J_{44,121}}{J_{11,121}}[2X(4)-X(3)-X(5)]+q^{-7}\frac{J_{33,121}}{J_{11,121}}[2X(3)-X(2)\nonumber \\
&-X(4)]-q^{5}\frac{J_{121}^3}{J_{11,121}}-q^{-4}\frac{J_{22,121}}{J_{11,121}}[2X(2)-X(1)-X(3)]+X(1)-X(2),\nonumber \\
&{\sum_n}'\frac{(-1)^nq^{\frac{n(n+1)}{2}+5n}}{1-q^{11n}}=\sum_{m=0}^{10}S_m(5,11,1,1)=0.\nonumber
\end{align}
\end{lemma}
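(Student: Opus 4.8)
The plan is to collapse the eleven sums $S_m(a,11,1,1)$ into a single Lambert-type series and then to evaluate it with the transformation formulas of Lemma~\ref{s1-eta}. Unwinding the definitions \eqref{S0-def}--\eqref{Sm-def}, the series $S_0(a,11,1,1)$ is the sub-series of
\[
T_a:={\sum_n}'\frac{(-1)^n q^{\frac{n(n+1)}{2}+an}}{1-q^{11n}}
\]
over $n\equiv 0\pmod{11}$ with the term $n=0$ removed, and for $1\le m\le 10$ the series $S_m(a,11,1,1)$ is the sub-series over $n\equiv m\pmod{11}$; hence $\sum_{m=0}^{10}S_m(a,11,1,1)=T_a$. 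Replacing $n$ by $-n$ in the terms with $n<0$ and using $\tfrac{1}{1-q^{-11n}}=\tfrac{-q^{11n}}{1-q^{11n}}$ gives $T_a=U_a-U_{10-a}$ with $U_b:=\sum_{n\ge 1}\frac{(-1)^n q^{\frac{n(n+1)}{2}+bn}}{1-q^{11n}}$; in particular $T_5=0$, which is the last line of the Lemma, so from now on $a\in\{0,1,2,3,4\}$.

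The engine is an $11$-dissection. For $0\le m\le 10$, substituting $n=11l+m$ in the $n\equiv m$ part of $T_a$ and using the elementary identities
\[
\tfrac{(11l+m)(11l+m+1)}{2}+a(11l+m)=11\,d_l+c_m,\qquad c_m=\tfrac{m(m+2a+1)}{2},\quad d_l=\tfrac{11l(l+1)}{2}+(m+a-5)l,
\]
together with $1-q^{11(11l+m)}=1-Q^{11l+m}$ where $Q=q^{11}$, one finds
\[
S_m(a,11,1,1)=(-1)^m q^{c_m}\sum_l\frac{(-1)^l Q^{d_l}}{1-Q^{11l+m}}
\]
(with $l=0$ omitted when $m=0$). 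I would then feed each such sum into Lemma~\ref{s1-eta} after replacing $q$ by $q^{121}$ and specializing the free parameters to powers of $Q=q^{11}$: the choice $b_1=Q^m$ turns $1-b_1q^{121k}$ into $1-Q^{11k+m}$, the choice $a_1/b_2=Q^{m+a-5}$ turns the weight $q^{121k(k+1)/2}(a_1/b_2)^k$ into $Q^{11k(k+1)/2+(m+a-5)k}$, and each generalized eta-factor $[q^{11j};q^{121}]_\infty$ that occurs equals $J_{11j,121}/J_{121}$, producing the $J_{\bullet,121}$ of the statement. The degenerate instance \eqref{L-2-1} evaluates outright the residue whose shift $m+a-5$ vanishes, giving $S_{5-a}(a,11,1,1)=(-1)^{5-a}q^{c_{5-a}}\,J_{121}^3/J_{11(5-a),121}$; for $a=0$ this is the term $-q^{15}\,J_{121}^3/J_{55,121}$. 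The other residues are reached by suitable specializations of \eqref{L-2-2} and \eqref{L-2-3} (in both their generic and their parameter-collision forms); since the left-hand side of \eqref{L-2-3} carries $X(q^{11i};q^{121})=X(i)$, this equation is the sole source of the functions $X(i)$ in the final answer.

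Finally, adding the contributions of $m=0,1,\dots,10$ — grouping residues into the six reflection classes $\{0\},\{1,10\},\{2,9\},\{3,8\},\{4,7\},\{5,6\}$, using $X(i)=-X(11-i)$ and the symmetry $[q^{11i};q^{121}]_\infty=[q^{11(11-i)};q^{121}]_\infty$ to merge eta-prefactors, and collecting terms — yields the six displayed closed forms for $\sum_{m=0}^{10}S_m(a,11,1,1)$. The difficulty here is not conceptual but organizational: one must track which parameter specializations fall into the degenerate regime (so that \eqref{L-2-3} rather than \eqref{L-2-2} must be invoked, which is exactly where the $X(i)$ enter), carry all the monomial prefactors $q^{c_m}$ and all the signs correctly through the dissection, and verify the sizeable cancellation that compresses the raw output into the compact forms stated. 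In practice this last collapse is checked with computer algebra; as an independent verification, each identity can, after using Lemma~\ref{X-eta} to replace every $X(i)$ by an eta-quotient, be recast as an identity between generalized eta-products on a suitable $\Gamma_1(N)$ and certified via Theorems~\ref{Thm-2.10}--\ref{Thm-2.12} and the \texttt{thetaids} package.
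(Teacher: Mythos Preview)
Your plan coincides with the paper's: split into the eleven residue sums $S_m(a,11,1,1)$, evaluate the distinguished one $S_{5-a}$ by \eqref{L-2-1}, and treat each of the remaining nine by \eqref{L-2-3} with $q\to q^{121}$, $b_1=q^{11m}$, $a_1=q^{11(m+a-5)}$; this expresses every such $S_m$ as an explicit eta/$X$ combination plus an eta-quotient multiple of $S_0$, and when one sums over $m$ those $S_0$-multiples cancel in pairs so that $S_0$ never has to be evaluated on its own. The one correction is that your displayed parametrization $a_1/b_2=Q^{m+a-5}$ is the specialization of \eqref{L-2-2}, which only relates two residue sums to each other and yields no closed form by itself; the paper uses \eqref{L-2-3} exclusively for the non-special residues (weight $a_1^k$ on the unrestricted sum, $(a_1/b_1)^k$ on the primed sum tied to $S_0$), and that is precisely where the $X(i)$ enter, as you correctly note.
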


\begin{proof}
Since the six equations mentioned above are analogous, we shall prove \eqref{cor-1-1} as a representative case. The proofs for the remaining equations follow a similar methodology, with adjustments specific to each case. By \eqref{L-2-1}, we get
\begin{align*}
S_5(0,11,1,1)=-q^{15}\frac{J_{121}^3}{J_{55,121}}.
\end{align*}
Using \eqref{L-2-3}, we obtain that
\begin{align*}
&S_1(0,11,1,1)=q\frac{J_{44,121}}{J_{55,121}}[-X(4)-X(1)-1-S_0(0,11,1,1)],\\
&S_2(0,11,1,1)=-q^3\frac{J_{33,121}}{J_{55,121}}[-X(3)-X(2)-1-S_0(0,11,1,1)],\\
&S_3(0,11,1,1)=q^6\frac{J_{22,121}}{J_{55,121}}[-X(2)-X(3)-1-S_0(0,11,1,1)],\\
&S_4(0,11,1,1)=-q^{10}\frac{J_{11,121}}{J_{55,121}}[-X(4)-X(1)-1-S_0(0,11,1,1)],\\
&S_6(0,11,1,1)=q^{10}\frac{J_{11,121}}{J_{55,121}}[X(1)+X(5)-S_0(0,11,1,1)],\\
&S_7(0,11,1,1)=-q^6\frac{J_{22,121}}{J_{55,121}}[X(2)+X(4)-S_0(0,11,1,1)],\\
&S_8(0,11,1,1)=q^3\frac{J_{33,121}}{J_{55,121}}[2X(3)-S_0(0,11,1,1)],\\
&S_9(0,11,1,1)=-q\frac{J_{44,121}}{J_{55,121}}[X(2)+X(4)-S_0(0,11,1,1)],\\
&S_{10}(0,11,1,1)=X(1)+X(5)-S_0(0,11,1,1).
\end{align*}
By summing the aforementioned equations and performing the necessary simplifications, we deduce \eqref{cor-1-1}.
\end{proof}

\begin{lemma}\label{2-eta}
Recall the definitions of $S_0(a,k,l,j)$ and $S_m(a,k,l,j)$ as presented in \eqref{S0-def} and \eqref{Sm-def}, respectively, we have
\begin{align}
&{\sum_n}'\frac{(-1)^nq^{\frac{n(n+1)}{2}}}{(1-q^{11n})^2}\label{cor-2-1}\\
=&\sum_{m=0}^{10}S_m(0,11,1,2)\nonumber \\
=&-q\frac{J_{22,121}J_{121}^3}{J_{11,121}^2}[X(5)+1]-q^{12}\frac{J_{33,121}J_{121}^3}{J_{22,121}J_{55,121}}[X(1)+X(2)+X(3)]\nonumber \\
&-q^3\frac{J_{33,121}J_{44,121}J_{121}^3}{J_{11,121}J_{22,121}J_{55,121}}[X(1)-X(2)-X(4)-1]\nonumber\\
&+q^{14}\frac{J_{22,121}^2J_{121}^3}{J_{11,121}J_{33,121}J_{55,121}}[X(1)+X(2)+X(3)]-q^{15}\frac{J_{121}^3}{J_{55,121}}[X(5)+1]\nonumber \\
&+q^{6}\frac{J_{44,121}J_{121}^3}{J_{11,121}J_{55,121}}[X(1)-2X(3)-1]-q^{17}\frac{J_{33,121}J_{121}^3}{J_{44,121}J_{55,121}}[2X(1)+X(4)]\nonumber \\
&+q^{10}\frac{J_{22,121}J_{121}^3}{J_{11,121}J_{44,121}}[-X(1)+X(2)+X(4)+1]+q^{21}\frac{J_{44,121}J_{121}^6}{J_{11,121}J_{55,121}^2}-\sum_{n=1}^{\infty}\frac{q^{121n}}{(1-q^{121n})^2}\nonumber\\
&-\frac{[X(1)+X(5)]^2}{2}-\frac{q^{11}J_{44,121}J_{121}^6}{2J_{11,121}^2J_{55,121}}+\frac{X(1)+X(5)}{2},\nonumber\\
&{\sum_n}'\frac{(-1)^nq^{\frac{n(n+1)}{2}+n}}{(1-q^{11n})^2}\label{cor-2-2}\\
=&\sum_{m=0}^{10}S_m(1,11,1,2)\nonumber \\
=&q^2\frac{J_{55,121}J_{121}^3}{J_{11,121}J_{33,121}}[X(5)+X(3)-X(1)-1]-q^{35}\frac{J_{11,121}J_{121}^6}{J_{33,121}J_{44,121}^2}+q^{14}\frac{J_{121}^3}{J_{44,121}}[X(4)+1]\nonumber \\
&-q^5\frac{J_{55,121}^2J_{121}^3}{J_{22,121}J_{33,121}J_{44,121}}[X(3)-X(2)-X(5)-1]-q^{27}\frac{J_{11,121}J_{22,121}J_{121}^3}{J_{33,121}J_{44,121}J_{55,121}}[X(3)+X(5)\nonumber\\
&-X(1)-1]-q^{9}\frac{J_{55,121}J_{121}^3}{J_{33,121}^2}[X(4)+1]+q^{20}\frac{J_{22,121}J_{121}^3}{J_{44,121}J_{55,121}}[X(3)-X(2)-X(5)-1]\nonumber\\
&+q^{21}\frac{J_{11,121}J_{121}^3}{J_{33,121}J_{44,121}}[2X(2)+X(3)]-q^{10}\frac{J_{22,121}J_{121}^3}{J_{11,121}J_{44,121}}[2X(3)+X(1)]-\sum_{n=1}^{\infty}\frac{q^{121n}}{(1-q^{121n})^2}\nonumber \\
&-\frac{[X(3)+X(4)]^2}{2}-\frac{q^{33}J_{11,121}J_{121}^6}{2J_{33,121}^2J_{44,121}}+\frac{X(3)+X(4)}{2},\nonumber\\
&{\sum_n}'\frac{(-1)^nq^{\frac{n(n+1)}{2}+2n}}{(1-q^{11n})^2}\nonumber\\
=&\sum_{m=0}^{10}S_m(2,11,1,2)\nonumber \\
=&-q^{12}\frac{J_{121}^3}{J_{33,121}}[X(3)+1]+q^3\frac{J_{44,121}J_{121}^3}{J_{11,121}J_{33,121}}[X(4)+X(5)-X(1)-1]-q^{25}\frac{J_{11,121}J_{121}^3}{J_{55,121}^2}[X(3)\nonumber \\
&+1]-q^{7}\frac{J_{44,121}J_{121}^3}{J_{22,121}J_{33,121}}[2X(5)-X(2)-1]-q^{18}\frac{J_{22,121}J_{121}^3}{J_{33,121}J_{55,121}}[X(5)-2X(4)-1]\nonumber\\
&+q^{19}\frac{J_{11,121}J_{121}^3}{J_{22,121}J_{55,121}}[X(1)+X(2)+X(5)]-q^{30}\frac{J_{22,121}J_{121}^6}{J_{33,121}^2J_{55,121}}-q^{9}\frac{J_{22,121}J_{44,121}J_{121}^3}{J_{11,121}J_{33,121}J_{55,121}}[X(1)\nonumber \\
&+X(2)+X(5)]-q^{31}\frac{J_{11,121}^2J_{121}^3}{J_{33,121}J_{44,121}J_{55,121}}[X(4)+X(5)-X(1)-1]-\sum_{n=1}^{\infty}\frac{q^{121n}}{(1-q^{121n})^2}\nonumber\\
&-\frac{[X(3)+X(5)]^2}{2}+\frac{q^{33}J_{22,121}J_{121}^6}{2J_{55,121}^2J_{33,121}}+\frac{X(3)+X(5)}{2},\nonumber\\
&{\sum_n}'\frac{(-1)^nq^{\frac{n(n+1)}{2}+3n}}{(1-q^{11n})^2}\nonumber\\
=&\sum_{m=0}^{10}S_m(3,11,1,2)\nonumber \\
=&q^{4}\frac{J_{33,121}^2J_{121}^3}{J_{11,121}J_{22,121}J_{44,121}}[X(3)-X(1)-X(4)-1]+q^{15}\frac{J_{11,121}J_{55,121}J_{121}^3}{J_{22,121}J_{33,121}J_{44,121}}[X(5)-X(4)\nonumber \\
&-X(3)-1]+q^{16}\frac{J_{33,121}J_{121}^3}{J_{44,121}^2}[X(2)+1]+q^{16}\frac{J_{11,121}J_{121}^3}{J_{22,121}J_{33,121}}[X(3)-X(1)-X(4)-1]\nonumber\\
&+q^{17}\frac{J_{55,121}J_{121}^6}{J_{22,121}^2J_{44,121}}+q^{17}\frac{J_{33,121}J_{121}^3}{J_{44,121}J_{55,121}}[X(5)-X(4)-X(3)-1]-q^{8}\frac{J_{55,121}J_{121}^3}{J_{22,121}J_{44,121}}[2X(1)\nonumber \\
&-X(4)]+q^{19}\frac{J_{11,121}J_{121}^3}{J_{22,121}J_{55,121}}[2X(4)+X(5)+1]+q^{9}\frac{J_{121}^3}{J_{22,121}}[X(2)+1]-\sum_{n=1}^{\infty}\frac{q^{121n}}{(1-q^{121n})^2}\nonumber\\
&-\frac{[X(2)-X(4)]^2}{2}+\frac{q^{22}J_{55,121}J_{121}^6}{2J_{22,121}J_{44,121}^2}+\frac{X(2)-X(4)}{2},\nonumber\\
&{\sum_n}'\frac{(-1)^nq^{\frac{n(n+1)}{2}+4n}}{(1-q^{11n})^2}\nonumber\\
=&\sum_{m=0}^{10}S_m(4,11,1,2)\nonumber \\
=&-q\frac{J_{33,121}J_{55,121}J_{121}^3}{J_{11,121}J_{22,121}J_{44,121}}[X(4)-X(2)-X(3)-1]+q\frac{J_{44,121}^2J_{121}^3}{J_{11,121}J_{22,121}J_{55,121}}[X(5)-X(2)\nonumber \\
&-X(4)-1]+q^{2}\frac{J_{55,121}J_{121}^3}{J_{11,121}J_{33,121}}[X(3)-2X(2)-1]+q^{2}\frac{J_{33,121}J_{121}^3}{J_{11,121}J_{22,121}}[2X(5)+X(2)+1]\nonumber\\
&+q^{4}\frac{J_{44,121}J_{121}^3}{J_{22,121}^2}[X(1)+1]+q^{4}\frac{J_{55,121}J_{121}^3}{J_{11,121}J_{44,121}}[X(5)-X(2)-X(4)-1]-q^{5}\frac{J_{121}^3}{J_{11,121}}[X(1)\nonumber\\
&+1]-q^{7}\frac{J_{44,121}J_{121}^3}{J_{22,121}J_{33,121}}[X(4)-X(2)-X(3)-1]-q^{7}\frac{J_{33,121}J_{121}^6}{J_{11,121}^2J_{22,121}}-\sum_{n=1}^{\infty}\frac{q^{121n}}{(1-q^{121n})^2}\nonumber \\
&-\frac{[X(1)+X(2)]^2}{2}+\frac{q^{11}J_{33,121}J_{121}^6}{2J_{11,121}J_{22,121}^2}+\frac{X(1)+X(2)}{2},\nonumber\\
&{\sum_n}'\frac{(-1)^nq^{\frac{n(n+1)}{2}+5n}}{(1-q^{11n})^2}\nonumber\\
=&\sum_{m=0}^{10}S_m(5,11,1,2)\nonumber \\
=&q^{13}\frac{J_{44,121}J_{121}^6}{J_{22,121}^3}-q^{6}\frac{J_{22,121}J_{121}^6}{J_{11,121}^3}-q^{40}\frac{J_{11,121}J_{121}^6}{J_{55,121}^3}+q^{30}\frac{J_{33,121}J_{121}^6}{J_{44,121}^3}-q^{21}\frac{J_{55,121}J_{121}^6}{J_{33,121}^3}-\sum_{n=1}^{\infty}\frac{q^{121n}}{(1-q^{121n})^2}.\nonumber
\end{align}
\end{lemma}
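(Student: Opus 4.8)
The plan is to establish the six identities of Lemma \ref{2-eta} by the very same residue-class strategy that proves the single-pole identities of Lemma \ref{1-eta}, with the double-pole toolkit \eqref{L-3-1}--\eqref{L-3-5} of Lemma \ref{s2-eta} playing the role that \eqref{L-2-1}--\eqref{L-2-3} played there. Fix $a\in\{0,1,2,3,4,5\}$ and write the summation index of the left-hand side as $N=11n+m$ with $0\le m\le 10$. Because $11$ is odd we have $(-1)^N=(-1)^{n+m}$, and the defining formulas \eqref{S0-def} and \eqref{Sm-def} show that the prime on the $m=0$ piece deletes exactly the forbidden term $N=0$; hence
\[
{\sum_n}'\frac{(-1)^nq^{\frac{n(n+1)}{2}+an}}{(1-q^{11n})^2}=\sum_{m=0}^{10}S_m(a,11,1,2),
\]
which is precisely the quantity to be evaluated. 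It therefore suffices to express each of the eleven summands $S_m(a,11,1,2)$ in closed form, under the substitution $q\mapsto q^{121}$ and $b_i\mapsto q^{11\,\ast}$ throughout, and then to add them.

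I would organise the eleven summands exactly as in the proof of Lemma \ref{1-eta}. The off-diagonal summands are produced in conjugate pairs by \eqref{L-3-4}, each pair contributing an eta-quotient times a bracket that is \emph{linear} in the $X(i)$, while the companion identities \eqref{L-3-1}, \eqref{L-3-2}, and \eqref{L-3-3} supply the bare eta-quotient prefactors and the single factors $X(b_1;q^{121})=X(\ast)$. The genuinely diagonal summand is handled by the diagonal identity \eqref{L-3-5}, which is the sole source of the three new ingredients that distinguish the $j=2$ case from the $j=1$ case: the quadratic block $-\tfrac12[X(i)+X(j)]^2$ coming from $\mathcal S_1[\mathcal S_1+2]$, the residual tail $-\sum_{n\ge1}q^{121n}/(1-q^{121n})^2$ coming from the $-\sum 2q^n/(1-q^n)^2$ term under $q\mapsto q^{121}$, and the $H$-difference inside $\mathcal S_2$, which Lemma \ref{S-eta} turns into the eta-quotients carrying the $J_{121}^6$ factors. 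As in Lemma \ref{1-eta}, each off-diagonal summand is thereby written in terms of the single unknown diagonal sum $S_0(a,11,1,2)$.

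Summing the eleven pieces then reproduces the cancellation already visible in Lemma \ref{1-eta}: the coefficients of the unknown $S_0(a,11,1,2)$ attached to the conjugate pairs telescope against one another, the net coefficient $-1$ annihilates the explicit $m=0$ contribution, and $S_0$ disappears from the answer. Collecting the survivors leaves exactly the eta-quotients, the linear-$X$ brackets, the quadratic-$X$ block, and the universal tail displayed in the statement. I would carry this out in full for \eqref{cor-2-1} as the representative case and treat $a=1,2,3,4$ by the same recipe. The value $a=5$ furnishes the most economical consistency check: the statement asserts that all $X$-dependence disappears there (the double-pole analogue of $\sum_m S_m(5,11,1,1)=0$), leaving only the five cyclic $J_{121}^6$ eta-quotients and the tail recorded in the last line, so verifying it tests the entire $X$-bookkeeping at once via the dictionary of Lemma \ref{X-eta}.

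The obstacle is not conceptual but the volume and delicacy of the bookkeeping. The powers of $q$ and the indices of the $J_{\ast,121}$ produced by $q\mapsto q^{121}$, $b_i\mapsto q^{11\,\ast}$ must be tracked precisely so that the numerous prefactors collapse to the compact forms shown, and one must confirm that the off-diagonal $S_0$-coefficients really cancel in exact conjugate pairs. The most delicate point is the diagonal term: the correct choice of $a_2,b_1$ in \eqref{L-3-5}, the sign and normalisation of both the quadratic block and the $q^{121n}/(1-q^{121n})^2$ tail, and the conversion of the $H$-difference into the right $J_{121}^6$ eta-quotient via Lemma \ref{S-eta} must all be made mutually consistent, and these pieces only match the displayed answer after being combined with the linear-$X$ contributions of \eqref{L-3-4}. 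Since every step is an explicit $q$-series manipulation, I would certify the final identities term by term with the \texttt{thetaids} package referenced above.
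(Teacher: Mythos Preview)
Your proposal is correct and follows essentially the same strategy as the paper. The only mechanical difference is that the paper works out \eqref{cor-2-2} (the case $a=1$) as its representative example, and there the auxiliary unknown that telescopes away is not $S_0$ but $S_8(1,11,1,2)$: identity \eqref{L-3-2} gives $S_4$ outright, \eqref{L-3-1} and \eqref{L-3-4} express each of the remaining $S_m$ with $1\le m\le 10$ in terms of $S_8$, and \eqref{L-3-5} delivers the combination $S_0+S_8$ in closed form, so that summing cancels $S_8$ rather than $S_0$.
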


\begin{proof}
We first prove \eqref{cor-2-2}. Using \eqref{L-3-2}, we have
\begin{align*}
S_4(1,11,1,2)=\frac{q^{14}J_{121}^3}{J_{44,121}}[X(4)+1].
\end{align*}
Employing \eqref{L-3-1}, we can get
\begin{align*}
S_7(1,11,1,2)=-\frac{q^2J_{33,121}}{J_{44,121}}S_8(1,11,1,2)-\frac{q^{35}J_{11,121}J_{121}^6}{J_{33,121}J_{44,121}^2}.
\end{align*}
By means of \eqref{L-3-4}, we have
\begin{align*}
&S_1(1,11,1,2)=\frac{q^2J_{33,121}}{J_{44,121}}S_8(1,11,1,2)+\frac{q^2J_{55,121}J_{121}^3}{J_{11,121}J_{33,121}}[X(3)+X(5)-X(1)-1],\\
&S_2(1,11,1,2)=-\frac{q^5J_{22,121}}{J_{44,121}}S_8(1,11,1,2)-\frac{q^5J_{55,121}^2J_{121}^3}{J_{22,121}J_{33,121}J_{44,121}}[X(3)-X(5)-X(2)-1],\\
&S_3(1,11,1,2)=\frac{q^9J_{11,121}}{J_{44,121}}S_8(1,11,1,2)-\frac{q^9J_{55,121}J_{121}^3}{J_{33,121}^2}[X(4)+1],\\
&S_5(1,11,1,2)=-\frac{q^9J_{11,121}}{J_{44,121}}S_8(1,11,1,2)+\frac{q^{20}J_{22,121}J_{121}^3}{J_{44,121}J_{55,121}}[X(3)-X(5)-X(2)-1],\\
&S_6(1,11,1,2)=\frac{q^5J_{22,121}}{J_{44,121}}S_8(1,11,1,2)-\frac{q^{27}J_{11,121}J_{22,121}J_{121}^3}{J_{33,121}J_{44,121}J_{55,121}}[X(3)+X(5)-X(1)-1],\\
&S_9(1,11,1,2)=-\frac{q^{-1}J_{55,121}}{J_{44,121}}S_8(1,11,1,2)+\frac{q^{21}J_{11,121}J_{121}^3}{J_{33,121}J_{44,121}}[2X(2)+X(3)],\\
&S_{10}(1,11,1,2)=\frac{q^{-1}J_{55,121}}{J_{44,121}}S_8(1,11,1,2)-\frac{q^{10}J_{22,121}J_{121}^3}{J_{11,121}J_{44,121}}[2X(3)+X(1)].
\end{align*}
Through \eqref{L-3-5}, we obtain
\begin{align*}
&S_0(1,11,1,2)+S_8(1,11,1,2)\\
=&-\sum_{n=1}^{\infty}\frac{q^{121n}}{(1-q^{121n})^2}-\frac{\mathcal{S}_1(q^{-77},q^{88};q^{121})[\mathcal{S}_1(q^{-77},q^{88};q^{121})+2]}{2}-\frac{\mathcal{S}_2(q^{-77},q^{88};q^{121})}{2}\\
=&-\sum_{n=1}^{\infty}\frac{q^{121n}}{(1-q^{121n})^2}-\frac{[X(3)+X(4)]^2-1}{2}+\frac{q^{33}J_{11,121}J_{121}^6}{2J_{44,121}J_{33,121}^2}+\frac{X(3)+X(4)-1}{2}.
\end{align*}
Adding these equations and simplifying, we arrive at \eqref{cor-2-2}. Similarly to the proof of \eqref{cor-2-2}, the same methodological approach can be applied to readily prove the remaining five equations.
\end{proof}

\begin{proof}[Proof of Theorem \ref{main-thm}]
To establish the proof of Theorem \ref{main-thm}, we first express $\mathcal{F}(b)$ in terms of eta-quotients, as presented in the Appendix. Initially, we consider the case where $b=1$. Utilizing \eqref{cor-1-1}-\eqref{cor-2-2}, we reformulate \eqref{generating-2} for $b=1$ as follows.
\begin{align}\label{mathcalF1}
&\mathcal{F}(1)=\sum_{n=0}^{\infty}(M_{\omega}(1,11,n)-M_{\omega}(10,11,n))q^n \\
=&\frac{11}{J_1}\sum_{m=0}^{10}(S_m(0,11,1,2)-S_m(1,11,1,2))-\frac{10}{J_1}\sum_{m=0}^{10}(S_m(0,11,1,1)-S_m(1,11,1,1))\nonumber\\
=&\frac{11}{J_1}
\Big \{
-q\frac{J_{22,121}J_{121}^3}{J_{11,121}^2}[X(5)+1]-q^{12}\frac{J_{33,121}J_{121}^3}{J_{22,121}J_{55,121}}[X(1)+X(2)+X(3)]\nonumber\\ &-q^3\frac{J_{33,121}J_{44,121}J_{121}^3}{J_{11,121}J_{22,121}J_{55,121}}[X(1)-X(2)-X(4)-1]+q^{14}\frac{J_{22,121}^2J_{121}^3}{J_{11,121}J_{33,121}J_{55,121}}[X(1)+X(2)\nonumber\\
&+X(3)]-q^{15}\frac{J_{121}^3}{J_{55,121}}[X(5)+1]+q^{6}\frac{J_{44,121}J_{121}^3}{J_{11,121}J_{55,121}}[X(1)-2X(3)-1]\nonumber\\
&-q^{17}\frac{J_{33,121}J_{121}^3}{J_{44,121}J_{55,121}}[2X(1)+X(4)]+q^{10}\frac{J_{22,121}J_{121}^3}{J_{11,121}J_{44,121}}[-X(1)+X(2)+X(4)+1]\nonumber\\ &+q^{21}\frac{J_{44,121}J_{121}^6}{J_{11,121}J_{55,121}^2}-\frac{[X(1)+X(5)]^2}{2}-\frac{q^{11}J_{44,121}J_{121}^6}{2J_{11,121}^2J_{55,121}^2}+\frac{X(1)+X(5)}{2}+q^{35}\frac{J_{11,121}J_{121}^6}{J_{33,121}J_{44,121}^2}\nonumber\\
&-q^2\frac{J_{55,121}J_{121}^3}{J_{11,121}J_{33,121}}[X(5)+X(3)-X(1)-1]-q^{14}\frac{J_{121}^3}{J_{44,121}}[X(4)+1]+\frac{q^{33}J_{11,121}J_{121}^6}{2J_{33,121}^2J_{44,121}}\nonumber\\
&+q^5\frac{J_{55,121}^2J_{121}^3}{J_{22,121}J_{33,121}J_{44,121}}[X(3)-X(2)-X(5)-1]+q^{27}\frac{J_{11,121}J_{22,121}J_{121}^3}{J_{33,121}J_{44,121}J_{55,121}}[X(3)+X(5)-X(1)\nonumber\\
&-1]+q^{9}\frac{J_{55,121}J_{121}^3}{J_{33,121}^2}[X(4)+1]-q^{20}\frac{J_{22,121}J_{121}^3}{J_{44,121}J_{55,121}}[X(3)-X(2)-X(5)-1]-\frac{X(3)+X(4)}{2}\nonumber\\ &-q^{21}\frac{J_{11,121}J_{121}^3}{J_{33,121}J_{44,121}}[2X(2)+X(3)]+q^{10}\frac{J_{22,121}J_{121}^3}{J_{11,121}J_{44,121}}[2X(3)+X(1)]+\frac{[X(3)+X(4)]^2}{2}
\Big \}\nonumber\\
&-\frac{10}{J_1}
\Big \{
-q\frac{J_{44,121}}{J_{55,121}}[2X(4)+X(1)+X(2)+1]+q^3\frac{J_{33,121}}{J_{55,121}}[3X(3)+X(2)+1]-q^{15}\frac{J_{121}^3}{J_{55,121}}\nonumber \\
&-q^6\frac{J_{22,121}}{J_{55,121}}[2X(2)+X(3)+X(4)+1]+q^{10}\frac{J_{11,121}}{J_{55,121}}[2X(1)+X(4)+X(5)+1]+X(1) \nonumber\\
&+X(5)+q^2\frac{J_{33,121}}{J_{44,121}}[2X(3)+X(1)+X(4)+1]-q^{14}\frac{J_{121}^3}{J_{44,121}}-q^5\frac{J_{22,121}}{J_{44,121}}[3X(2)+X(5)+1] \nonumber\\
&+q^9\frac{J_{11,121}}{J_{44,121}}[2X(1)+X(3)-X(5)+1]+q^{-1}\frac{J_{55,121}}{J_{44,121}}[2X(5)+X(2)-X(1)]-X(3)-X(4)
\Big \}.\nonumber
\end{align}

Drawing on the theory of modular functions and inspired by the method outlined in \cite{Chen-maple-2020}, we identify \eqref{mathcalF1} comprises two distinct  parts, denoted as $h_{\frac{1}{2}}(1)$ and $h_{\frac{3}{2}}(1)$.  For one thing,
\begin{align}\label{f-1-2}
h_{\frac{1}{2}}(1)
=&\frac{11}{J_1}
\Big \{
-q\frac{21J_{22,121}J_{121}^3}{22J_{11,121}^2}+q^{12}\frac{21J_{33,121}J_{121}^3}{22J_{22,121}J_{55,121}}+q^3\frac{21J_{33,121}J_{44,121}J_{121}^3}{22J_{11,121}J_{22,121}J_{55,121}}\\
&-q^{14}\frac{21J_{22,121}^2J_{121}^3}{22J_{11,121}J_{33,121}J_{55,121}}-q^{15}\frac{21J_{121}^3}{22J_{55,121}}-q^{6}\frac{21J_{44,121}J_{121}^3}{22J_{11,121}J_{55,121}}\nonumber\\
&+q^{17}\frac{21J_{33,121}J_{121}^3}{22J_{44,121}J_{55,121}}+q^{10}\frac{21J_{22,121}J_{121}^3}{22J_{11,121}J_{44,121}}+\frac{21\left[X(1)+X(5)+\frac{5}{11}\right]}{22}\nonumber\\
&+q^2\frac{19J_{55,121}J_{121}^3}{22J_{11,121}J_{33,121}}-q^{14}\frac{19J_{121}^3}{22J_{44,121}}-q^5\frac{19J_{55,121}^2J_{121}^3}{22J_{22,121}J_{33,121}J_{44,121}}\nonumber\\
&-q^{27}\frac{19J_{11,121}J_{22,121}J_{121}^3}{22J_{33,121}J_{44,121}J_{55,121}}+q^{9}\frac{19J_{55,121}J_{121}^3}{22J_{33,121}^2}+q^{20}\frac{19J_{22,121}J_{121}^3}{22J_{44,121}J_{55,121}}+q^{21}\frac{19J_{11,121}J_{121}^3}{22J_{33,121}J_{44,121}}\nonumber\\  &-q^{10}\frac{19J_{22,121}J_{121}^3}{22J_{11,121}J_{44,121}}-\frac{19\left[X(3)+X(4)+\frac{4}{11}\right]}{22}
\Big \}\nonumber\\
&-\frac{10}{J_1}
\Big \{
-q\frac{J_{44,121}}{J_{55,121}}[2X(4)+X(1)+X(2)+1]+q^3\frac{J_{33,121}}{J_{55,121}}[3X(3)+X(2)+1] \nonumber\\
&-q^{15}\frac{J_{121}^3}{J_{55,121}}-q^6\frac{J_{22,121}}{J_{55,121}}[2X(2)+X(3)+X(4)+1]+q^{10}\frac{J_{11,121}}{J_{55,121}}[2X(1)+X(4) \nonumber\\
&+X(5)+1]+q^2\frac{J_{33,121}}{J_{44,121}}[2X(3)+X(1)+X(4)+1]-q^{14}\frac{J_{121}^3}{J_{44,121}} \nonumber\\
&-q^5\frac{J_{22,121}}{J_{44,121}}[3X(2)+X(5)+1]+q^9\frac{J_{11,121}}{J_{44,121}}[2X(1)+X(3)-X(5)+1]\nonumber\\
&+q^{-1}\frac{J_{55,121}}{J_{44,121}}[2X(5)+X(2)-X(1)]+X(1)+X(5)-X(3)-X(4)+\frac{1}{11} \Big \}.\nonumber
\end{align}
We now prove $h_{\frac{1}{2}}(1)=g_{\frac{1}{2}}(1)$; where
\begin{align}\label{g-1-2}
g_{\frac{1}{2}}(1):=&\frac{9}{22}Y_0(q^{11})-\frac{q}{11}Y_1(q^{11})-\frac{2q^2}{11}Y_2(q^{11})+\frac{5q^3}{22}Y_3(q^{11})+\frac{q^4}{22}Y_4(q^{11})+\frac{4q^5}{11}Y_5(q^{11})
\\&+\frac{3q^7}{22}Y_7(q^{11})-\frac{q^8}{22}Y_8(q^{11})-\frac{5q^9}{22}Y_9(q^{11})+\frac{2q^{10}}{11}Y_{10}(q^{11}),\nonumber
\end{align}
and $Y_m(q)$ are defined in the Appendix for $0\leq m\leq 10$. Utilizing the algorithm presented in \cite{Chen-maple-2020}, we initially apply \eqref{f-1-2} to rephrase \eqref{g-1-2} as modular function identities for generalized eta-products on $\Gamma_1(121)$, conforming to the structure of \eqref{f-tau-1}. Then, using \cite[Theorem 2.9]{Chen-maple-2020}, we verify that each generalized eta-product is a modular function on $\Gamma_1(121)$. With the help of Theorems \ref{Thm-2.10} and \ref{Thm-2.11}, we compute the order of each generalized eta-product at each cusp of $\Gamma_1(121)$. Upon calculating the constant in \eqref{B}, we determine that $B=-946$. We define $g(\tau)$ as the expression on the right side of \eqref{f-tau-1} and demonstrate that Ord$(g(\tau),\infty,\Gamma_1(121))\textgreater 946$, which is a straightforward process. The desired identity is subsequently derived from Theorem \ref{Thm-2.12}. On the other hand, we have
\begin{align}\label{f-3-2}
h_{\frac{3}{2}}(1)
=&\frac{11}{J_1}
\Bigg \{
-q\frac{J_{22,121}J_{121}^3}{J_{11,121}^2}\left[X(5)+\frac{1}{22}\right]-q^{12}\frac{J_{33,121}J_{121}^3}{J_{22,121}J_{55,121}}\left[X(1)+X(2)+X(3)+\frac{21}{22}\right]\\ &-q^3\frac{J_{33,121}J_{44,121}J_{121}^3}{J_{11,121}J_{22,121}J_{55,121}}\left[X(1)-X(2)-X(4)-\frac{1}{22}\right]\nonumber\\
&+q^{14}\frac{J_{22,121}^2J_{121}^3}{J_{11,121}J_{33,121}J_{55,121}}\left[X(1)+X(2)+X(3)+\frac{21}{22}\right]\nonumber\\
&-q^{15}\frac{J_{121}^3}{J_{55,121}}\left[X(5)+\frac{1}{22}\right]+q^{6}\frac{J_{44,121}J_{121}^3}{J_{11,121}J_{55,121}}\left[X(1)-2X(3)-\frac{1}{22}\right]\nonumber\\
&-q^{17}\frac{J_{33,121}J_{121}^3}{J_{44,121}J_{55,121}}\left[2X(1)+X(4)+\frac{21}{22}\right]+q^{10}\frac{J_{22,121}J_{121}^3}{J_{11,121}J_{44,121}}\left[-X(1)+X(2)+X(4)+\frac{1}{22}\right]\nonumber\\ &+q^{21}\frac{J_{44,121}J_{121}^6}{J_{11,121}J_{55,121}^2}-\frac{\left[X(1)+X(5)+\frac{5}{11}\right]^2}{2}-\frac{q^{11}J_{44,121}J_{121}^6}{2J_{11,121}^2J_{55,121}}\nonumber\\
&-q^2\frac{J_{55,121}J_{121}^3}{J_{11,121}J_{33,121}}\left[X(5)+X(3)-X(1)-\frac{3}{22}\right]+q^{35}\frac{J_{11,121}J_{121}^6}{J_{33,121}J_{44,121}^2}\nonumber\\
&-q^{14}\frac{J_{121}^3}{J_{44,121}}\left[X(4)+\frac{3}{22}\right]+q^5\frac{J_{55,121}^2J_{121}^3}{J_{22,121}J_{33,121}J_{44,121}}\left[X(3)-X(2)-X(5)-\frac{3}{22}\right]\nonumber\\
&+q^{27}\frac{J_{11,121}J_{22,121}J_{121}^3}{J_{33,121}J_{44,121}J_{55,121}}\left[X(3)+X(5)-X(1)-\frac{3}{22}\right]+q^{9}\frac{J_{55,121}J_{121}^3}{J_{33,121}^2}\left[X(4)+\frac{3}{22}\right]\nonumber\\
&-q^{20}\frac{J_{22,121}J_{121}^3}{J_{44,121}J_{55,121}}\left[X(3)-X(2)-X(5)-\frac{3}{22}\right]-q^{21}\frac{J_{11,121}J_{121}^3}{J_{33,121}J_{44,121}}\left[2X(2)+X(3)+\frac{19}{22}\right]\nonumber\\ &+q^{10}\frac{J_{22,121}J_{121}^3}{J_{11,121}J_{44,121}}\left[2X(3)+X(1)+\frac{19}{22}\right]+\frac{\left[X(3)+X(4)+\frac{4}{11}\right]^2}{2}+\frac{q^{33}J_{11,121}J_{121}^6}{2J_{33,121}^2J_{44,121}}
\Bigg \}.\nonumber
\end{align}
Next we require to prove $h_{\frac{3}{2}}(1)=g_{\frac{3}{2}}(1)$; where
\begin{align}\label{g-3-2}
g_{\frac{3}{2}}(1):=&\left\{-\frac{9}{22}V_0(q^{11})t(q^{11})^{-1}-\frac{61}{22}V_0(q^{11})-\frac{95}{22}V_0(q^{11})t(q^{11})-\frac{49}{22}V_0(q^{11})T(q^{11})\right.\\\nonumber
&\left.+\frac{48}{11}V_0(q^{11})T(q^{11})t(q^{11})-\frac{95}{22}V_0(q^{11})T(q^{11})t(q^{11})^2\right\}+q\left\{-\frac{46}{11}V_1(q^{11})-\frac{42}{11}V_1(q^{11})t(q^{11})\right.\\\nonumber
&+\frac{4}{11}V_1(q^{11})t(q^{11})^2+\frac{36}{11}V_1(q^{11})T(q^{11})-\frac{71}{11}V_1(q^{11})T(q^{11})t(q^{11})-3V_1(q^{11})T(q^{11})t(q^{11})^3\\\nonumber
&\left.-\frac{145}{11}V_1(q^{11})T(q^{11})t(q^{11})^2\right\}+q^2\left\{\frac{2}{11}V_2(q^{11})t(q^{11})^{-1}-\frac{17}{11}V_2(q^{11})+\frac{15}{11}V_2(q^{11})t(q^{11})\right.\\\nonumber
&\left.+\frac{39}{11}V_2(q^{11})T(q^{11})-\frac{58}{11}V_2(q^{11})T(q^{11})t(q^{11})+\frac{4}{11}V_2(q^{11})T(q^{11})t(q^{11})^2\right\}\\\nonumber
&+q^3\left\{\frac{5}{11}V_3(q^{11})t(q^{11})^{-1}-\frac{153}{22}V_3(q^{11})-\frac{53}{22}V_3(q^{11})t(q^{11})-\frac{15}{22}V_3(q^{11})T(q^{11})t(q^{11})^{-1}\right.\\\nonumber
&\left.+\frac{73}{11}V_3(q^{11})T(q^{11})+\frac{60}{11}V_3(q^{11})T(q^{11})t(q^{11})\right\}+q^4\left\{\frac{122}{11}V_4(q^{11})t(q^{11})^{-1}+\frac{405}{22}V_4(q^{11})\right.\\\nonumber
&+\frac{161}{22}V_4(q^{11})t(q^{11})-\frac{245}{22}V_4(q^{11})T(q^{11})t(q^{11})^{-1}-\frac{135}{11}V_4(q^{11})T(q^{11})\\\nonumber
&\left.-\frac{10}{11}V_4(q^{11})T(q^{11})t(q^{11})\right\}+q^5\left\{-\frac{15}{11}V_5(q^{11})+\frac{103}{11}V_5(q^{11})t(q^{11})+9V_5(q^{11})t(q^{11})^2\right.\\\nonumber
&+\frac{16}{11}V_5(q^{11})t(q^{11})^3-7V_5(q^{11})T(q^{11})t(q^{11})-\frac{91}{11}V_5(q^{11})T(q^{11})t(q^{11})^2\\\nonumber
&\left.-\frac{16}{11}V_5(q^{11})T(q^{11})t(q^{11})^3\right\}+q^6\left\{-10V_6(q^{11})-19V_6(q^{11})t(q^{11})-6V_6(q^{11})t(q^{11})^2\right.\\\nonumber
&\left.+9V_6(q^{11})T(q^{11})+6V_6(q^{11})T(q^{11})t(q^{11})+6V_6(q^{11})T(q^{11})t(q^{11})^2\right\}\\\nonumber
&+q^7\left\{\frac{141}{22}V_7(q^{11})t(q^{11})^{-1}+\frac{56}{11}V_7(q^{11})+\frac{197}{22}V_7(q^{11})t(q^{11})+\frac{21}{22}V_7(q^{11})t(q^{11})^2\right.\\\nonumber
&-\frac{141}{22}V_7(q^{11})T(q^{11})t(q^{11})^{-1}-\frac{9}{11}V_7(q^{11})T(q^{11})-8V_7(q^{11})T(q^{11})t(q^{11})\\\nonumber
&\left.-\frac{21}{22}V_7(q^{11})T(q^{11})t(q^{11})^2\right\}+q^8\left\{-2V_8(q^{11})t(q^{11})^{-1}-\frac{21}{22}V_8(q^{11})+\frac{107}{22}V_8(q^{11})t(q^{11})\right.\\\nonumber
&+\frac{79}{22}V_8(q^{11})t(q^{11})^2-\frac{107}{22}V_8(q^{11})T(q^{11})t(q^{11})+\frac{2}{11}V_8(q^{11})T(q^{11})t(q^{11})^2+\\\nonumber
&\left.\frac{79}{22}V_8(q^{11})T(q^{11})t(q^{11})^3\right\}+q^9\left\{\frac{15}{22}V_9(q^{11})t(q^{11})^{-2}+\frac{4}{11}V_9(q^{11})t(q^{11})^{-1}\right.\\\nonumber
&-\frac{5}{11}V_9(q^{11})-\frac{15}{22}V_9(q^{11})T(q^{11})t(q^{11})^{-2}+\frac{7}{22}V_9(q^{11})T(q^{11})t(q^{11})^{-1}-\frac{29}{11}V_9(q^{11})T(q^{11})\\\nonumber
&\left.-\frac{1}{22}V_9(q^{11})T(q^{11})t(q^{11})\right\}+q^{10}\left\{\frac{75}{11}V_{10}(q^{11})+\frac{39}{11}V_{10}(q^{11})t(q^{11})-\frac{4}{11}V_{10}(q^{11})t(q^{11})^2\right.\\\nonumber
&\left.+\frac{16}{11}V_{10}(q^{11})T(q^{11})t(q^{11})+\frac{14}{11}V_{10}(q^{11})T(q^{11})t(q^{11})^2-\frac{4}{11}V_{10}(q^{11})T(q^{11})t(q^{11})^3\right\},
\end{align}
and $T(q)$, $t(q)$, and $V_m(q)$ for $0\leq m\leq 10$ are defined in the appendix. By employing the same method with Garvan's MAPLE program, we calculate that $B=-1683$ and can readily demonstrate its equivalence. It is easy to delineate the 11-dissection of $g_{\frac{1}{2}}(1)$ and $g_{\frac{3}{2}}(1)$, which leads us to the following result:
\begin{align}\label{6-1}
f_6(1)=-10V_6-19V_6t-6V_6t^2+9V_6T+6V_6Tt+6V_6Tt^2;
\end{align}
where $V_6$, $T$, and $t$ are defined in the  Appendix. Similarly, for $b=2,3,4,5$, we can calculate other eight formula's $B$ using Garvan's MAPLE program:
\begin{align*}
&B_{\frac{1}{2},2}=-935,&B_{\frac{3}{2},2}=-1683;\\
&B_{\frac{1}{2},3}=-946,&B_{\frac{3}{2},3}=-1683;\\
&B_{\frac{1}{2},4}=-935,&B_{\frac{3}{2},4}=-1683;\\
&B_{\frac{1}{2},5}=-814,&B_{\frac{3}{2},5}=-1683;
\end{align*}
where $B_{l,b}$ denotes the value of $B$ within the formula corresponding to parameter $b$ and weight $l$.  Next, We  prove eight identities and calculate their 11-dissection, which leads to the  derivation of the following identities:
\begin{align}
&f_6(2)=2V_6+17V_6t+10V_6t^2-4V_6T-10V_6Tt-10V_6Tt^2,\label{6-2}\\
&f_6(3)=3V_6-13V_6t-7V_6t^2-6V_6T+7V_6Tt+7V_6Tt^2,\label{6-3}\\
&f_6(4)=-7V_6+V_6t-2V_6t^2+3V_6T+2V_6Tt+2V_6Tt^2,\label{6-4}\\
&f_6(5)=5V_6+4V_6t+3V_6t^2+V_6T-3V_6Tt-3V_6Tt^2.\label{6-5}
\end{align}
Furthermore, we have determined the 11-dissection of the generating functions for $M_{\omega}(b,11,n)-M_{\omega}(11-b,11,n)$, as detailed in the appendix. Combining \eqref{6-1}-\eqref{6-5}, we arrive at
$$
f_6(1)+2f_6(2)+3f_6(3)+4f_6(4)+5f_6(5)=0.
$$
We  thus complete the proof of Theorem \ref{main-thm}.
\end{proof}

\section*{Appendix}
Define
\begin{align*}
&T:=T(q)=\frac{J_{2,11}^2J_{5,11}}{J_{1,11}J_{4,11}^2},\\
&t:=t(q)=\frac{qJ_{1,11}J_{4,11}}{J_{5,11}^2},\\
&f_m(b)=\sum_{n=0}^{\infty}(M_{\omega}(b,11,11n+m)-M_{\omega}(11-b,11,11n+m))q^n.\\
\end{align*}
We have
\begin{itemize}
\item $m=0$ \\
\begin{align*}
&f_0(1)=-\frac{9}{22}V_0t^{-1}-\frac{61}{22}V_0-\frac{95}{22}V_0t-\frac{49}{22}V_0T+\frac{48}{11}V_0Tt-\frac{95}{22}V_0Tt^2+\frac{9}{22}Y_0,\\
&f_0(2)=-\frac{7}{22}V_0t^{-1}+\frac{87}{22}V_0-\frac{47}{22}V_0t-\frac{65}{22}V_0T-\frac{3}{11}V_0Tt-\frac{47}{22}V_0Tt^2+\frac{7}{22}Y_0,\\
&f_0(3)=-\frac{5}{22}V_0t^{-1}-\frac{249}{22}V_0+\frac{1}{22}V_0t+\frac{161}{22}V_0T-\frac{54}{11}V_0Tt+\frac{1}{22}V_0Tt^2+\frac{5}{22}Y_0,\\
&f_0(4)=-\frac{3}{22}V_0t^{-1}+\frac{141}{22}V_0+\frac{49}{22}V_0t-\frac{97}{22}V_0T+\frac{16}{11}V_0Tt+\frac{49}{22}V_0Tt^2+\frac{3}{22}Y_0,\\
&f_0(5)=-\frac{1}{22}V_0t^{-1}+\frac{47}{22}V_0+\frac{97}{22}V_0t-\frac{113}{22}V_0T-\frac{35}{11}V_0Tt+\frac{97}{22}V_0Tt^2+\frac{1}{22}Y_0,\\
&\text{where}~~ V_0:=V_0(q)=\frac{qJ_{4,11}J_{11}^5}{J_{2,11}^2J_{3,11}}, Y_0:=Y_0(q)=\frac{J_{11}^2}{J_{1,11}}.
\end{align*}
\item $m=1$ \\
\begin{align*}
&f_1(1)=-\frac{46}{11}V_1-\frac{42}{11}V_1t+\frac{4}{11}V_1t^2+\frac{36}{11}V_1T-\frac{71}{11}V_1Tt-\frac{145}{11}V_1Tt^2-3V_1Tt^3-\frac{1}{11}Y_1,\\
&f_1(2)=-\frac{4}{11}V_1-\frac{223}{22}V_1t-\frac{215}{22}V_1t^2+\frac{1}{22}V_1T+\frac{78}{11}V_1Tt+\frac{40}{11}V_1Tt^2-6V_1Tt^3+\frac{7}{22}Y_1,\\
&f_1(3)=\frac{49}{11}V_1+\frac{221}{22}V_1t+\frac{123}{22}V_1t^2-\frac{103}{22}V_1T-\frac{48}{11}V_1Tt-\frac{6}{11}V_1Tt^2+2V_1Tt^3+\frac{5}{22}Y_1,\\
&f_1(4)=-\frac{19}{11}V_1-\frac{61}{22}V_1t-\frac{23}{22}V_1t^2+\frac{35}{22}V_1T-\frac{53}{11}V_1Tt-\frac{52}{11}V_1Tt^2-V_1Tt^3+\frac{3}{22}Y_1,\\
&f_1(5)=-\frac{87}{11}V_1-\frac{101}{22}V_1t+\frac{73}{22}V_1t^2+\frac{173}{22}V_1T+\frac{63}{11}V_1Tt+\frac{23}{11}V_1Tt^2+7V_1Tt^3+\frac{1}{22}Y_1,\\
&\text{where}~~ V_1:=V_1(q)=\frac{J_{5,11}^2J_{11}^5}{J_{2,11}^4}, Y_1:=Y_1(q)=\frac{J_{5,11}J_{11}^2}{J_{2,11}J_{3,11}}.
\end{align*}
\item $m=2$ \\
\begin{align*}
&f_2(1)=\frac{2}{11}V_2t^{-1}-\frac{17}{11}V_2+\frac{15}{11}V_2t+\frac{39}{11}V_2T-\frac{58}{11}V_2Tt+\frac{4}{11}V_2Tt^2-\frac{2}{11}Y_2,\\
&f_2(2)=-\frac{47}{22}V_2t^{-1}-\frac{145}{22}V_2-\frac{171}{22}V_2t-\frac{9}{22}V_2T+\frac{16}{11}V_2Tt+\frac{27}{22}V_2Tt^2+\frac{3}{22}Y_2,\\
&f_2(3)=-\frac{5}{11}V_2t^{-1}-\frac{18}{11}V_2+\frac{23}{11}V_2t-\frac{37}{11}V_2T+\frac{24}{11}V_2Tt-\frac{10}{11}V_2Tt^2+\frac{5}{11}Y_2,\\
&f_2(4)=-\frac{3}{11}V_2t^{-1}+\frac{86}{11}V_2+\frac{38}{11}V_2t+\frac{2}{11}V_2T-\frac{34}{11}V_2Tt-\frac{6}{11}V_2Tt^2+\frac{3}{11}Y_2,\\
&f_2(5)=-\frac{1}{11}V_2t^{-1}-\frac{173}{11}V_2-\frac{68}{11}V_2t+\frac{41}{11}V_2T+\frac{29}{11}V_2Tt-\frac{2}{11}V_2Tt^2+\frac{1}{11}Y_2,\\
&\text{where}~~ V_2:=V_2(q)=\frac{qJ_{11}^5}{J_{2,11}^2}, Y_2:=Y_2(q)=\frac{J_{3,11}J_{11}^2}{J_{1,11}J_{4,11}}.
\end{align*}
\item $m=3$ \\
\begin{align*}
&f_3(1)=\frac{5}{11}V_3t^{-1}-\frac{153}{22}V_3-\frac{53}{22}V_3t-\frac{15}{22}V_3Tt^{-1}+\frac{73}{11}V_3T+\frac{60}{11}V_3Tt+\frac{5}{22}Y_3,\\
&f_3(2)=-\frac{1}{11}V_3t^{-1}+\frac{79}{22}V_3+\frac{59}{22}V_3t+\frac{3}{22}V_3Tt^{-1}-\frac{63}{11}V_3T-\frac{12}{11}V_3Tt-\frac{1}{22}Y_3,\\
&f_3(3)=-\frac{51}{11}V_3t^{-1}+\frac{18}{11}V_3-\frac{8}{11}V_3t+\frac{16}{11}V_3Tt^{-1}-\frac{67}{11}V_3T-\frac{7}{11}V_3Tt+\frac{2}{11}Y_3,\\
&f_3(4)=\frac{9}{11}V_3t^{-1}-\frac{227}{22}V_3-\frac{47}{22}V_3t-\frac{27}{22}V_3Tt^{-1}+\frac{83}{11}V_3T-\frac{13}{11}V_3Tt+\frac{9}{22}Y_3,\\
&f_3(5)=\frac{3}{11}V_3t^{-1}+\frac{247}{22}V_3+\frac{65}{22}V_3t-\frac{9}{22}V_3Tt^{-1}-\frac{53}{11}V_3T-\frac{85}{11}V_3Tt+\frac{3}{22}Y_3,\\
&\text{where}~~ V_3:=V_3(q)=\frac{qJ_{4,11}J_{11}^5}{J_{2,11}^2J_{5,11}}, Y_3:=Y_3(q)=\frac{J_{2,11}J_{11}^2}{J_{1,11}J_{3,11}}.
\end{align*}
\item $m=4$ \\
\begin{align*}
&f_4(1)=\frac{122}{11}V_4t^{-1}+\frac{405}{22}V_4+\frac{161}{22}V_4t-\frac{245}{22}V_4Tt^{-1}-\frac{135}{11}V_4T-\frac{10}{11}V_4Tt+\frac{1}{22}Y_4,\\
&f_4(2)=-\frac{75}{11}V_4t^{-1}-\frac{123}{11}V_4-\frac{48}{11}V_4t+\frac{52}{11}V_4Tt^{-1}+\frac{82}{11}V_4T+\frac{24}{11}V_4Tt+\frac{1}{11}Y_4,\\
&f_4(3)=\frac{3}{11}V_4t^{-1}+\frac{5}{22}V_4-\frac{1}{22}V_4t-\frac{9}{22}V_4Tt^{-1}-\frac{42}{11}V_4T-\frac{30}{11}V_4Tt+\frac{3}{22}Y_4,\\
&f_4(4)=-\frac{29}{11}V_4t^{-1}-\frac{4}{11}V_4+\frac{25}{11}V_4t-\frac{17}{11}V_4Tt^{-1}-\frac{78}{11}V_4T-\frac{73}{11}V_4Tt+\frac{2}{11}Y_4,\\
&f_4(5)=\frac{5}{11}V_4t^{-1}-\frac{153}{22}V_4-\frac{163}{22}V_4t-\frac{15}{22}V_4Tt^{-1}+\frac{51}{11}V_4T+\frac{71}{11}V_4Tt+\frac{5}{22}Y_4,\\
&\text{where}~~ V_4:=V_4(q)=\frac{qJ_{1,11}J_{3,11}J_{4,11}J_{11}^5}{J_{2,11}^4J_{5,11}}, Y_4:=Y_4(q)=\frac{J_{11}^2}{J_{2,11}}.
\end{align*}
\item $m=5$ \\
\begin{align*}
&f_5(1)=-\frac{15}{11}V_5+\frac{103}{11}V_5t+9V_5t^2+\frac{16}{11}V_5t^3-7V_5Tt-\frac{91}{11}V_5Tt^2-\frac{16}{11}V_5Tt^3+\frac{4}{11}Y_5,\\
&f_5(2)=-\frac{5}{22}V_5-\frac{369}{22}V_5t-15V_5t^2-\frac{35}{22}V_5t^3+8V_5Tt+\frac{126}{11}V_5Tt^2+\frac{35}{22}V_5Tt^3+\frac{5}{22}Y_5,\\
&f_5(3)=-\frac{34}{11}V_5+\frac{56}{11}V_5t+5V_5t^2+\frac{4}{11}V_5t^3+V_5Tt-\frac{53}{11}V_5Tt^2-\frac{4}{11}V_5Tt^3+\frac{1}{11}Y_5,\\
&f_5(4)=\frac{1}{22}V_5-\frac{23}{22}V_5t+3V_5t^2+\frac{7}{22}V_5t^3-6V_5Tt-\frac{1}{11}V_5Tt^2-\frac{7}{22}V_5Tt^3-\frac{1}{22}Y_5,\\
&f_5(5)=-\frac{53}{11}V_5+\frac{9}{11}V_5t+V_5t^2-\frac{8}{11}V_5t^3-2V_5Tt-\frac{15}{11}V_5Tt^2+\frac{8}{11}V_5Tt^3-\frac{2}{11}Y_5,\\
&\text{where}~~ V_5:=V_5(q)=\frac{J_{3,11}J_{5,11}J_{11}^5}{J_{1,11}J_{2,11}^2J_{4,11}}, Y_5:=Y_5(q)=\frac{J_{4,11}J_{11}^2}{J_{2,11}J_{5,11}}.
\end{align*}
\item $m=6$ \\
\begin{align*}
&f_6(1)=-10V_6-19V_6t-6V_6t^2+9V_6T+6V_6Tt+6V_6Tt^2,\\
&f_6(2)=2V_6+17V_6t+10V_6t^2-4V_6T-10V_6Tt-10V_6Tt^2,\\
&f_6(3)=3V_6-13V_6t-7V_6t^2-6V_6T+7V_6Tt+7V_6Tt^2,\\
&f_6(4)=-7V_6+V_6t-2V_6t^2+3V_6T+2V_6Tt+2V_6Tt^2,\\
&f_6(5)=5V_6+4V_6t+3V_6t^2+V_6T-3V_6Tt-3V_6Tt^2,\\
&\text{where}~~ V_6:=V_6(q)=\frac{J_{3,11}J_{11}^5}{J_{1,11}J_{2,11}^2}.
\end{align*}
\item $m=7$ \\
\begin{align*}
&f_7(1)=\frac{141}{22}V_7t^{-1}+\frac{56}{11}V_7+\frac{197}{22}V_7t+\frac{21}{22}V_7t^2-\frac{141}{22}V_7Tt^{-1}-\frac{9}{11}V_7T-8V_7Tt-\frac{21}{22}V_7Tt^2+\frac{3}{22}Y_7,\\
&f_7(2)=-\frac{46}{11}V_7t^{-1}-\frac{64}{11}V_7-\frac{78}{11}V_7t-\frac{12}{11}V_7t^2+\frac{46}{11}V_7Tt^{-1}-\frac{7}{11}V_7T+6V_7Tt+\frac{12}{11}V_7Tt^2+\frac{3}{11}Y_7,\\
&f_7(3)=-\frac{25}{11}V_7t^{-1}-\frac{19}{11}V_7+\frac{26}{11}V_7t+\frac{4}{11}V_7t^2+\frac{25}{11}V_7Tt^{-1}-\frac{38}{11}V_7T-2V_7Tt-\frac{4}{11}V_7Tt^2-\frac{1}{11}Y_7,\\
&f_7(4)=\frac{47}{22}V_7t^{-1}+\frac{59}{11}V_7-\frac{15}{22}V_7t+\frac{7}{22}V_7t^2-\frac{47}{22}V_7Tt^{-1}-\frac{3}{11}V_7T+V_7Tt-\frac{7}{22}V_7Tt^2+\frac{1}{22}Y_7,\\
&f_7(5)=\frac{50}{11}V_7t^{-1}-\frac{83}{11}V_7-\frac{52}{11}V_7t-\frac{8}{11}V_7t^2-\frac{50}{11}V_7Tt^{-1}+\frac{76}{11}V_7T+4V_7Tt+\frac{8}{11}V_7Tt^2+\frac{2}{11}Y_7,\\
&\text{where}~~ V_7:=V_7(q)=\frac{J_{3,11}J_{4,11}J_{11}^5}{J_{1,11}J_{2,11}^2J_{5,11}}, Y_7:=Y_7(q)=\frac{J_{11}^2}{J_{3,11}}.
\end{align*}
\item $m=8$ \\
\begin{align*}
&f_8(1)=-2V_8t^{-1}-\frac{21}{22}V_8+\frac{107}{22}V_8t+\frac{79}{22}V_8t^2-\frac{107}{22}V_8Tt+\frac{2}{11}V_8Tt^2+\frac{79}{22}V_8Tt^3-\frac{1}{22}Y_8,\\
&f_8(2)=-4V_8t^{-1}-\frac{120}{11}V_8-\frac{80}{11}V_8t+\frac{2}{11}V_8t^2-\frac{41}{11}V_8Tt-\frac{29}{11}V_8Tt^2+\frac{2}{11}V_8Tt^3-\frac{1}{11}Y_8,\\
&f_8(3)=5V_8t^{-1}+\frac{179}{22}V_8+\frac{79}{22}V_8t-\frac{5}{22}V_8t^2+\frac{163}{22}V_8Tt+\frac{6}{11}V_8Tt^2-\frac{5}{22}V_8Tt^3-\frac{3}{22}Y_8,\\
&f_8(4)=-8V_8t^{-1}-\frac{95}{22}V_8-\frac{23}{22}V_8t-\frac{69}{22}V_8t^2-\frac{219}{22}V_8Tt-\frac{14}{11}V_8Tt^2-\frac{69}{22}V_8Tt^3+\frac{7}{22}Y_8,\\
&f_8(5)=V_8t^{-1}-\frac{124}{11}V_8-\frac{123}{11}V_8t-\frac{6}{11}V_8t^2+\frac{2}{11}V_8Tt-\frac{34}{11}V_8Tt^2-\frac{6}{11}V_8Tt^3+\frac{3}{11}Y_8,\\
&\text{where}~~ V_8:=V_8(q)=\frac{qJ_{1,11}J_{5,11}J_{11}^5}{J_{2,11}^2J_{3,11}J_{4,11}}, Y_8:=Y_8(q)=\frac{qJ_{1,11}J_{11}^2}{J_{4,11}J_{5,11}}.
\end{align*}
\item $m=9$ \\
\begin{align*}
&f_9(1)=\frac{15}{22}V_9t^{-2}+\frac{4}{11}V_9t^{-1}-\frac{5}{11}V_9-\frac{15}{22}V_9Tt^{-2}+\frac{7}{22}V_9Tt^{-1}-\frac{29}{11}V_9T-\frac{1}{22}V_9Tt-\frac{5}{22}Y_9,\\
&f_9(2)=-\frac{3}{22}V_9t^{-2}-\frac{25}{11}V_9t^{-1}+\frac{1}{11}V_9+\frac{3}{22}V_9Tt^{-2}+\frac{47}{22}V_9Tt^{-1}+\frac{30}{11}V_9T+\frac{97}{22}V_9Tt+\frac{1}{22}Y_9,\\
&f_9(3)=-\frac{21}{22}V_9t^{-2}+\frac{67}{11}V_9t^{-1}+\frac{7}{11}V_9+\frac{21}{22}V_9Tt^{-2}-\frac{155}{22}V_9Tt^{-1}-\frac{32}{11}V_9T-\frac{47}{22}V_9Tt+\frac{7}{22}Y_9,\\
&f_9(4)=\frac{8}{11}V_9t^{-2}-\frac{28}{11}V_9t^{-1}+\frac{35}{11}V_9-\frac{8}{11}V_9Tt^{-2}+\frac{36}{11}V_9Tt^{-1}-\frac{39}{11}V_9T-\frac{57}{11}V_9Tt+\frac{1}{11}Y_9,\\
&f_9(5)=\frac{9}{22}V_9t^{-2}-\frac{46}{11}V_9t^{-1}-\frac{124}{11}V_9-\frac{9}{22}V_9Tt^{-2}+\frac{101}{22}V_9Tt^{-1}+\frac{31}{11}V_9T-\frac{49}{22}V_9Tt-\frac{3}{22}Y_9,\\
&\text{where}~~ V_9:=V_9(q)=\frac{J_{4,11}^2J_{11}^5}{J_{2,11}^3J_{5,11}}, Y_9:=Y_9(q)=\frac{J_{11}^2}{J_{4,11}}.
\end{align*}
\item $m=10$ \\
\begin{align*}
&f_{10}(1)=\frac{75}{11}V_{10}+\frac{39}{11}V_{10}t-\frac{4}{11}V_{10}t^2+\frac{16}{11}V_{10}Tt+\frac{14}{11}V_{10}Tt^2-\frac{4}{11}V_{10}Tt^3+\frac{2}{11}Y_{10},\\
&f_{10}(2)=-\frac{92}{11}V_{10}-\frac{43}{11}V_{10}t-\frac{8}{11}V_{10}t^2+\frac{32}{11}V_{10}Tt+\frac{28}{11}V_{10}Tt^2-\frac{8}{11}V_{10}Tt^3+\frac{4}{11}Y_{10},\\
&f_{10}(3)=\frac{43}{22}V_{10}+\frac{3}{22}V_{10}t+\frac{9}{22}V_{10}t^2-\frac{157}{22}V_{10}Tt-\frac{46}{11}V_{10}Tt^2+\frac{9}{22}V_{10}Tt^3+\frac{1}{22}Y_{10},\\
&f_{10}(4)=-\frac{52}{11}V_{10}+\frac{2}{11}V_{10}t+\frac{6}{11}V_{10}t^2-\frac{24}{11}V_{10}Tt-\frac{21}{11}V_{10}Tt^2+\frac{6}{11}V_{10}Tt^3-\frac{3}{11}Y_{10},\\
&f_{10}(5)=\frac{23}{11}V_{10}-\frac{80}{11}V_{10}t+\frac{2}{11}V_{10}t^2+\frac{113}{11}V_{10}Tt-\frac{7}{11}V_{10}Tt^2+\frac{2}{11}V_{10}Tt^3-\frac{1}{11}Y_{10},\\
&\text{where}~~ V_{10}:=V_{10}(q)=\frac{J_{5,11}^2J_{11}^5}{J_{2,11}^2J_{3,11}}, Y_{10}:=Y_{10}(q)=\frac{J_{11}^2}{J_{5,11}}.
\end{align*}
\end{itemize}

\subsection*{Acknowledgements}

The first author was supported in part by  the National Natural Science Foundation of China (Grant No. 12201387).







\end{document}